\newlength\matfield
\newlength\tmplength
\definecolor{markercolor}{RGB}{124.9, 255, 160.65}
\newtheorem{theorem}{Theorem}[section]
\newtheorem{lemma}[theorem]{Lemma}
\newtheorem{remark}{Remark}
\renewcommand{\hat}{\widehat}
\renewcommand{\tilde}{\widetilde}
\newcommand*\diff[1]{\mathop{}\!{\mathrm{d}#1}} % d in integrand
\newcommand{\diag}[1]{{\rm diag}\LRp{#1}} 
\newcommand{\td}[2]{\frac{{\rm d}#1}{{\rm d}{ {#2}}}}
\newcommand{\pd}[2]{\frac{\partial#1}{\partial#2}}
\newcommand{\LRp}[1]{\left( #1 \right)}
\newcommand{\LRs}[1]{\left[ #1 \right]}
\newcommand{\LRa}[1]{\left\langle #1 \right\rangle}
\newcommand{\LRc}[1]{\left\{ #1 \right\}}
\newcommand{\jump}[1] {\ensuremath{\llbracket#1\rrbracket}}
\newcommand{\avg}[1] {\ensuremath{\LRc{\!\LRc{#1}\!}}}
\newcommand{\fnt}[1]{\bm{\mathsf{ #1}}}
\newcommand{\eq}[1]{\begin{align*}#1\end{align*}}
\newcommand{\eqlab}[1]{\begin{align}#1\end{align}}
\newcommand{\bmat}[1]{\begin{bmatrix}#1\end{bmatrix}}
\date{}
\begin{document}

%\maketitle

\begin{frontmatter}
\title{Entropy stable modal discontinuous Galerkin schemes and wall boundary conditions for the compressible Navier-Stokes equations}
\author[rice]{Jesse Chan}
\ead{jesse.chan@rice.edu}
\author[rice]{Yimin Lin}
\ead{yiminlin@rice.edu}
\author[vt]{Tim Warburton}
\ead{tcew@vt.edu}
\address[rice]{Department of Computational and Applied Mathematics, Rice University, 6100 Main St, Houston, TX, 77005}
\address[vt]{Department of Mathematics, Virginia Tech, 225 Stanger Street, Blacksburg, VA 24061-1026}
\begin{abstract}
Entropy stable schemes ensure that physically meaningful numerical solutions also satisfy a semi-discrete entropy inequality under appropriate boundary conditions. In this work, we describe a discretization of viscous terms in the compressible Navier-Stokes equations which enables a simple and explicit imposition of entropy stable no-slip (adiabatic and isothermal) and reflective (symmetry) wall boundary conditions for discontinuous Galerkin (DG) discretizations. Numerical results confirm the robustness and accuracy of the proposed approaches.
\end{abstract}
\end{frontmatter}

\section{Introduction}

Computational fluid dynamics (CFD) has relied mainly on first and second order numerical methods, which are robust and reliable. However, because higher order schemes offer improved accuracy at similar computational costs, they have received significant interest as demand for greater resolution in engineering simulations increases \cite{cfd2030}. Discontinuous Galerkin (DG) schemes are among the most popular high order schemes for CFD, especially for transient vorticular flows \cite{wang2013high, huynh2014high}. However, high order methods typically suffer from issues of robustness, especially in the presence of shocks and under-resolved solution features. Entropy stable high order DG schemes \cite{carpenter2014entropy, gassner2016split, chen2017entropy, crean2018entropy, chan2017discretely} provide one way to improve robustness without sacrificing high order accuracy. This improved robustness can be attributed to the fact that entropy stable schemes are stable in the sense that they satisfy a semi-discrete entropy inequality, even in the presence of aliasing errors resulting from under-integration, nonlinear fluxes, and curved geometries \cite{mengaldo2015dealiasing}.

Entropy stable DG schemes for the compressible Euler and Navier-Stokes equations were introduced for tensor product (quadrilateral and hexahedral) meshes by Carpenter et al.\ in \cite{carpenter2014entropy} and Gassner, Winters, and Kopriva in \cite{gassner2016split}. The construction of such schemes utilized connections between nodal DG spectral element methods (DG-SEM) and summation by parts (SBP) finite difference operators. These schemes were later extended to simplicial meshes in \cite{chen2017entropy, crean2018entropy} based on a generalization of SBP operators to the multi-dimensional case \cite{hicken2016multidimensional}. Entropy stable schemes were then extended to more general ``modal'' DG formulations in \cite{chan2017discretely, chan2018discretely, chan2019skew}. Other recent entropy stable numerical schemes include staggered grid schemes \cite{parsani2016entropy, fernandez2019staggered}, collocation schemes based on Gauss points \cite{chan2018efficient, chan2020mortar}, and entropy stable reduced order models \cite{chan2019entropy}. Entropy stable schemes have also been extended to the fully discrete case using entropy conservative and entropy stable relaxation Runge-Kutta time-stepping methods \cite{ranocha2019relaxation, ranocha2020fully}. 

For periodic domains, entropy stable schemes automatically guarantee the satisfaction of a semi-discrete entropy inequality. However, for non-periodic domains, entropy stable schemes must also be paired with appropriate entropy stable boundary conditions. Boundary conditions for DG schemes are typically imposed through the solution of appropriate Riemann problems \cite{mengaldo2014guide}, though not all such boundary conditions are entropy stable. The stability of boundary conditions for the compressible Navier-Stokes equations has typically been analyzed based on a linearized stability analysis \cite{svard2008stable}; however, linearly stable boundary conditions do not necessarily imply entropy stability either. Instead, more recent work has focused on the construction of nonlinearly stable boundary conditions for the compressible Euler and Navier-Stokes equations. Inviscid entropy stable wall and far-field boundary conditions for the compressible Euler equations were investigated in \cite{svard2014entropy, chen2017entropy}, and viscous entropy stable adiabatic wall boundary conditions were analyzed in \cite{parsani2015entropy, svard2018entropy, dalcin2019conservative}. 

In this work, we focus on the construction of viscous wall boundary conditions for the compressible Navier-Stokes equations which mimic the continuous entropy balance. The key novelty of this work is a modified DG discretization of the viscous terms which simplifies methods for imposing viscous wall boundary conditions. In \cite{parsani2015entropy, svard2018entropy, dalcin2019conservative}, viscous wall boundary conditions are imposed by transforming between conservative and primitive variables. In this work, we introduce a modified viscous discretization which is more amenable to modal DG discretizations. We also show this formulation enables the imposition of no-slip wall boundary conditions in a simple and explicit fashion while also providing simpler proofs of entropy conservation. Finally, we derive an entropy stable imposition of reflective symmetry boundary conditions on the viscous stresses, which have not yet been treated in the literature on entropy stable schemes. 

The outline of the paper is as follows: Section~\ref{sec:esthry} reviews entropy stability theory for the compressible Navier-Stokes equations, and Section~\ref{sec:esdg} reviews the construction of entropy stable high order ``modal'' DG methods. Section~\ref{sec:wbc} describes the the imposition of adiabatic, isothermal, no-slip, and symmetry wall boundary conditions which mimic the continuous entropy balance, and discusses the construction of boundary penalization terms. Section~\ref{sec:num} provides numerical experiments which verify our theoretical results, and we provide conclusions and outlook in Section~\ref{sec:conc}.

\section{Entropy stability for the compressible Navier-Stokes equations}
\label{sec:esthry}

Let $\bm{u}$ denote the vector of conservative variables. In two dimensions, these are
\[
\bm{u} = \LRc{\rho, \rho u_1, \ldots, \rho u_d, E} \in \mathbb{R}^{d+2}.
\]
Here, $\rho$ is density, $u_i$ denotes the velocity in the $i$th coordinate direction, and $E$ denotes the specific total energy. We also introduce the pressure $p$ and temperature $T$, which are related to the conservative variables through the constitutive relations
\[
p = (\gamma-1) \rho e, \qquad E = {e+\frac{1}{2} \sum_{i=1}^d u_i^2}, \qquad e = c_v T,
\]
where $\gamma = 1.4$, $e$ is the internal energy density, and $c_v$ is the specific heat at constant volume. ${\rm Pr}$ denotes the Prandtl number, and $\mu, \lambda$ are the dynamic and bulk viscosity coefficients, respectively. 

The compressible Navier-Stokes equations in $d$ dimensions are given by 
\eqlab{
\pd{\bm{u}}{t} + \sum_{i=1}^d \pd{\bm{f}_i}{x_i}  = \sum_{i=1}^d \pd{\bm{g}_i}{x_i} , \label{eq:cns}%\sum_{i=1}^d \pd{\LRp{\bm{f}_i - \bm{g}_i}}{x_i}  = 0, \label{eq:cns}
}
where $\bm{f}_i$ denote the inviscid fluxes in the $i$th coordinate direction. 

In this work, we focus on the two-dimensional compressible Navier-Stokes equations. However, the main contributions of this paper are straightforward to extend to three dimensions, and we present results in a dimension-independent manner when possible. For $d=2$, the inviscid fluxes $\bm{f}_i$ are given by
\[
\bm{f}_1 = \bmat{
\rho u_1\\
\rho u_1^2 + p\\
\rho u_1u_2 \\
u_1 (E + p)
}, \qquad
\bm{f}_2 = \bmat{
\rho u_2\\
\rho u_1u_2\\
\rho u_2^2 + p\\
u_2 (E + p)
}
\]
The viscous fluxes $\bm{g}_1, \bm{g}_2$ for $d=2$ are given by
\eqlab{
\bm{g}_1 = \bmat{
0\\
\tau_{1,1}\\
\tau_{2,1} \\
\sum_{i=1}^d \tau_{i,1}u_i - \kappa \pd{T}{x_1}
}, \qquad
\bm{g}_2 = \bmat{
0\\
\tau_{1,2}\\
\tau_{2,2} \\
\sum_{i=1}^d \tau_{i,2}u_i - \kappa \pd{T}{x_2}
}.
\label{eq:cnsflux}
}
Here, $\kappa= \kappa(T)$ denotes the thermal conductivity, and $\tau_{i,j}$ denote the components of the viscous stress tensor
\eqlab{
\tau_{i,j} = \mu\LRp{\pd{u_i}{x_j} + \pd{u_j}{x_i}} - \delta_{ij} \lambda \LRp{\sum_{i=1}^d\pd{u_i}{x_i} }, \qquad 1\leq i,j \leq d.
\label{eq:taustress}
}
We assume Stokes hypothesis in this work, or that $\lambda = \frac{2}{3}\mu$. 

\subsection{Nondimensionalization}

We follow \cite{chan2014dpg} and introduce nondimensional quantities for length, density, velocity, temperature, and viscosity
\begin{gather}
\bm{x}^* = \frac{\bm{x}}{L}, \qquad \rho^* = \frac{\rho}{\rho_{\infty}}, \qquad T^* = \frac{T}{T_\infty}, \qquad \mu^* = \frac{\mu}{\mu_{\infty}}\\
u_i = \frac{u_i}{U_{\infty}}, \qquad i = 1,\ldots, d.
\end{gather}
We can then non-dimensionalize pressure, internal energy, and bulk viscosity with respect to combinations of reference quantities
\[
p^* = \frac{p}{\rho_{\infty} U_{\infty}^2}, \qquad e = \frac{e}{U_\infty^2}, \qquad \lambda^* = \frac{\lambda}{\mu_\infty}.
\]
We introduce the Reynolds and free-stream Mach numbers
\eqlab{
{\rm Re} = \frac{\rho_\infty U_\infty L}{\mu_\infty}, \qquad {\rm Ma} = \frac{U_\infty}{\sqrt{\gamma(\gamma-1)c_v T_\infty}}.
\label{eq:ReMa}
}
Note that the reference Mach number is the ratio of the free-stream velocity to the free-stream speed of sound $a_\infty$  
%Assuming that $\rho_\infty = 1$; then, 
\[
a_\infty = \sqrt{\frac{\gamma p_\infty}{\rho_\infty}} = \sqrt{\gamma (\gamma-1) c_v T_\infty},
\]
since $p = (\gamma-1)\rho e$ and $e = c_v T$. 

The non-dimensionalized equations take the same form as the original equations if we define new physical parameters
\[
\tilde{\mu} = \frac{\mu^*}{{\rm Re}}, \qquad \tilde{\lambda} = \frac{\lambda^*}{{\rm Re}}, \qquad \tilde{c}_v = \frac{1}{\gamma(\gamma-1){\rm Ma}^2}, 
\qquad \tilde{\kappa} = \frac{\gamma \tilde{c}_v \tilde{\mu}}{\rm Pr}.
\]
From this point on, we drop both the tilde and the $*$ superscript and assume all variables to refer to their nondimensionalized quantities. 

\subsection{Entropy variables and symmetrization}

The compressible Navier-Stokes equations admit a mathematical entropy inequality with respect to the convex scalar entropy function $S(\bm{u})$
\[
S(\bm{u}) = -\rho s, %((γ-1)*rhoe/(rho^γ)))
\]
where $s = \log\LRp{\frac{p}{\rho^\gamma}}$ denotes the physical entropy \cite{hughes1986new}. The derivative of the entropy with respect to the conservative variables yield the entropy variables $\bm{v}(\bm{u}) = \pd{S}{\bm{u}} = \LRc{v_1, v_2, v_3, v_4}$, where
\eqlab{
v_1 = \frac{\rho e (\gamma + 1 - s) - E}{\rho e}, \qquad v_{1+ i}= \frac{\rho {{u}_i}}{\rho e}, \qquad v_{d+2} = -\frac{\rho}{\rho e} \label{eq:evars}
}
for $i = 1,\ldots, d$.  The inverse mapping is given by
\begin{align*}
\rho = -(\rho e) v_{d+2}, \qquad 
\rho {u_i} = (\rho e) v_{1+i}, \qquad 
 E = (\rho e)\LRp{1 - \frac{\sum_{j=1}^d{v_{1+j}^2}}{2 v_{d+2}}},
\end{align*}
where $i = 1,\ldots,d$, and $\rho e$ and $s$ in terms of the entropy variables are 
\begin{equation*}
\rho e = \LRp{\frac{(\gamma-1)}{\LRp{-v_{d+2}}^{\gamma}}}^{1/(\gamma-1)}e^{\frac{-s}{\gamma-1}}, \qquad 
s = \gamma - v_1 + \frac{\sum_{j=1}^d{v_{1+j}^2}}{2v_{d+2}}.
\end{equation*}
It was shown in \cite{hughes1986new} that the entropy variables \textit{symmetrizes} the viscous fluxes in the sense that
\eqlab{
\sum_{i=1}^d \pd{\bm{g}_i}{x_i} = \sum_{i,j=1}^d \pd{}{x_i} \LRp{\bm{K}_{ij}\pd{\bm{v}}{x_j}}. %\pd{}{x_1}\LRp{\bm{K}_{11} \pd{\bm{v}}{x_1} + \bm{K}_{12} \pd{\bm{v}}{x_2}} + \pd{}{x_2}\LRp{\bm{K}_{21} \pd{\bm{v}}{x_1} + \bm{K}_{22} \pd{\bm{v}}{x_2}},
\label{eq:symvisc}
}
where $\bm{K}_{ij}$ denote blocks of a symmetric and positive semi-definite matrix $\bm{K}$
\[
\bm{K} = \bmat{\bm{K}_{11} & \ldots & \bm{K}_{1d}\\
\vdots & \ddots & \vdots\\
\bm{K}_{d1} & \ldots & \bm{K}_{dd}} = \bm{K}^T, \qquad \bm{K} \succeq 0.
\]
Formulas for these matrices for $d=2$ are given in terms of the entropy variables and physical parameters 
\begin{gather*}
K_{11} =\frac{1}{v_4^3} 
\left(
    \begin{array}{cccc}
        0 & 0 & 0 & 0 \\
        0 & -(\lambda+2\mu)v_4^2 & 0 & (\lambda+2\mu)v_2v_4 \\ 
        0 & 0 & -\mu v^2_4 & \mu v_3v_4 \\
       0 & (\lambda+2\mu)v_2v_4 & \mu v_3v_4 & -[(\lambda+2\mu)v_2^2 + \mu(v_3^2) -\gamma\mu v_4/Pr]
    \end{array}
\right)\\
K_{12} = \frac{1}{v_4^3}\left(\begin{array}{cccc}
0 & 0 & 0 & 0 \\
0 & 0 & -\lambda v_4^2 & \lambda v_3v_4 \\
0 & -\mu v_4^2 & 0 & \mu v_2v_4 \\
0 & \mu v_3v_4 & \lambda v_2v_4 & (\lambda+\mu)(-v_2v_3) 
\end{array}\right)\\
K_{21} = \frac{1}{v_4^3}\left(\begin{array}{cccc}
0 & 0 & 0 & 0 \\
0 & 0 &  -\mu v_4^2  & \mu v_3v_4 \\
0 & -\lambda v_4^2 & 0 & \lambda v_2v_4 \\
0 & \lambda v_3v_4 & \mu v_2v_4 & (\lambda+\mu)(-v_2v_3)
\end{array}\right)\\
K_{22} = 
\frac{1}{v_4^3} 
\left(
    \begin{array}{cccc}
        0 & 0 & 0 & 0 \\
        0 & -\mu v_4^2 & 0 & \mu v_2v_4 \\ 
        0 & 0 & -(\lambda+2\mu) v^2_4 & (\lambda+2\mu) v_3v_4 \\
       0 & \mu v_2v_4 & (\lambda+2\mu)v_3v_4 & -[(\lambda+2\mu)v_3^2 + \mu(v_2^2) -\gamma\mu v_4/Pr]
    \end{array}
\right)
\end{gather*}
Similar formulas for the symmetrized matrices $\bm{K}_{ij}$ in three-dimensions are derived in \cite{hughes1986new}.

\subsection{Continuous entropy balance}

An entropy balance equation can be derived by multiplying the compressible Navier-Stokes equations by the entropy variables and integrating over the domain. We begin by introducing a few related identities. It can be shown that the following identity is satisfied 
\eqlab{
\bm{v}^T\pd{\bm{f}_i(\bm{u})}{x_i} &= \pd{F_i(\bm{u})}{x_i} \label{eq:fluxidentity}\\
F_i(\bm{u}) &= \bm{v}(\bm{u})^T\bm{f}_i(\bm{u})- \psi_i(\bm{u}), \nonumber
}
where $F_i(\bm{u})$ and $\psi_i(\bm{u})$ denote scalar entropy fluxes and potentials, respectively. For the compressible Navier-Stokes equations, $F_i(\bm{u})$ and $\psi_i(\bm{u})$ are given by \cite{hughes1986new, chen2017entropy}
\[
F_i(\bm{u}) = -\frac{s \rho u_i }{\gamma-1}, \qquad \psi_i(\bm{u}) = \rho u_i.
\]
Multiplying (\ref{eq:cns}) by $\bm{v}^T$, integrating over $\Omega$, and using the chain rule and aforementioned identities then yields
\eqlab{
%\label{eq:continentropy1}
\int_{\Omega} \pd{S(\bm{u})}{t} + \int_{\partial \Omega} \sum_{i=1}^d \LRp{F_i(\bm{u}) - \bm{v}^T\bm{g}_i}n_i +  \int_{\Omega}\sum_{i,j=1}^d \LRp{\pd{\bm{v}}{x_i}}^T\LRp{\bm{K}_{i,j} \pd{\bm{v}}{x_j}}  = 0.
}
Using that $e = c_vT$, along with definitions of the entropy variables and viscous fluxes $\bm{g}_i$, we can show that the boundary contributions $\bm{v}^T\bm{g}_i$ reduce to a scaling by $c_v$ of the quantity known as ``heat entropy flow'' \cite{dalcin2019conservative}
\eqlab{
%\label{eq:continentropy2}
\bm{v}^T\bm{g}_i  = \frac{1}{c_v T}\kappa\pd{T}{x_i}.
}
Thus, the entropy balance for the compressible Navier-Stokes equations is
\eqlab{
\label{eq:continentropy1}
\int_{\Omega} \pd{S(\bm{u})}{t} = \int_{\partial \Omega} \sum_{i=1}^d \LRp{\frac{1}{c_vT}\kappa\pd{T}{x_i}-F_i(\bm{u})}n_i -  \int_{\Omega}\sum_{i,j=1}^d \LRp{\pd{\bm{v}}{x_i}}^T\LRp{\bm{K}_{i,j} \pd{\bm{v}}{x_j}}.
}
Since the latter term involving $\bm{K}_{ij}$ is non-positive, we can bound the rate of change of the integrated entropy by
\eqlab{
\int_{\Omega} \pd{S(\bm{u})}{t} \leq \int_{\partial \Omega} \sum_{i=1}^d \LRp{\frac{1}{c_vT}\kappa\pd{T}{x_i}-F_i(\bm{u})}n_i.
\label{eq:entropybalance}
}
For certain boundary conditions, both the inviscid and viscous boundary terms in (\ref{eq:entropybalance}) vanish \cite{svard2014entropy, parsani2015entropy, chen2017entropy,svard2018entropy, dalcin2019conservative}, implying that the solution is entropy stable. More generally, the goal of this work will be to impose boundary conditions such that the semi-discrete entropy inequality mimics the continuous entropy balance (\ref{eq:continentropy1}). 

\section{Entropy stable modal DG discretizations}
\label{sec:esdg}

\subsection{On notation}

The notation in this paper is motivated by notation in \cite{crean2018entropy, fernandez2019entropy}. Unless otherwise specified, vector and matrix quantities are denoted using lower and upper case bold font, respectively.  Spatially discrete quantities are denoted using a bold sans serif font. Finally, the output of continuous functions evaluated over discrete vectors is interpreted as a discrete vector. 

For example, if $\fnt{x}$ denotes a vector of point locations, i.e., $(\fnt{x})_i = \bm{x}_i$, then $u(\fnt{x})$ is interpreted as the vector 
\[	({u}(\fnt{x}))_i = {u}(\bm{x}_i).
\]
Similarly, if $\fnt{u} = {u}(\fnt{x})$, then ${f}(\fnt{u})$ corresponds to the vector
\[
	({f}(\fnt{u}))_i = {f}(u(\bm{x}_i)).
\]
Vector-valued functions are treated similarly. For example, given a vector-valued function $\bm{f}:\mathbb{R}^n\rightarrow \mathbb{R}^n$ and a vector of coordinates $\fnt{x}$, we adopt the convention that $\LRp{\bm{f}(\fnt{x})}_i = \bm{f}(\bm{x}_i)$.

\subsection{Modal DG discretizations}

We now discuss the construction of an entropy stable DG discretization for the compressible Navier-Stokes equations. %While the two-dimensional case is considered in this paper, the formulation will be applicable to $d = 1,2,3$. 
For generality, we assume a ``modal'' framework which is applicable to a broad range of approximation spaces and quadrature rules. We assume the domain $\Omega$ can be decomposed into non-overlapping elements $D^k$, each of which is the image of a reference element $\hat{D}$ under an invertible mapping $\bm{\Phi}^k$. Let $\hat{n}_i$ denote the $i$th component of the outward normal vector on the boundary of the reference element $\partial \hat{D}$, and let $\hat{J}_f$ denote the determinant of the Jacobian of the transformation between a face of $\hat{D}$ and some reference face. Let $\hat{\bm{x}}, \bm{x}$ denote coordinates on the reference element $\hat{D}$ and physical element $D^k$, respectively, such that
\eqlab{
\bm{\hat{x}} = \LRc{\hat{x}_1, \ldots, \hat{x}_d}, \qquad \bm{x} = \LRc{x_1, \ldots, x_d}.
\label{eq:coords}
}
We also assume that the boundary of each element $D^k$ is denoted by $\partial D^k$, and that the outward unit normal on each face in $\partial D^k$ is denoted by $\bm{n} = \LRc{n_1,\ldots,n_d}$. Finally, let $J^k$ denote the determinant of the Jacobian of the mapping $\bm{\Phi^k}$, and let $J^k_f$ denote the determinant of the Jacobian of the mapping from a face of $\partial D^k$ to a reference face. 

Local approximation spaces on each physical element $D^k$ are defined as mappings of a reference approximation space. For this work, we assume $\hat{D}$ is the bi-unit right triangle and that the reference approximation space is the space of total degree $N$ polynomials
\[
P^N = \LRc{\hat{x}_1^i\hat{x}_2^j, \quad i,j \geq 0, \quad i+j\leq N}
\]
where $\hat{x}_i$ denotes the $i$th coordinate on the reference element. 

Next, we introduce notation for jumps and averages of functions across element interfaces. Let $u(\bm{x})$ be a scalar function on $D^k$, and let $u, u^+$ denote its ``interior'' and ``exterior'' values across the face shared by neighbor $D^{k,+}$
\[
\avg{u} = \frac{u^+ + u}{2}, \qquad \jump{u} = u^+- u.
\]
The jump and average of vector-valued functions are defined component-wise. Boundary conditions are also imposed by specifying appropriate exterior values. 

We also assume volume and surface quadrature rules which are exact for degree $2N$ polynomials.  Let $\LRc{\bm{x}_i, w_i}_{i=1}^{N_q}$ denote the points and weights of the volume quadrature rule, and let $\LRc{\bm{x}^f_i, w^f_i}_{i=1}^{N^f_q}$ denote the points and weights of the surface quadrature rule. Now, let $\LRc{\phi_i(\bm{x})}_{i=1}^{N_p}$ denote basis functions for $P^N$. We define the quadrature-based interpolation matrices $\fnt{V}_q, \fnt{V}_f$, mass matrix $\fnt{M}$, and integrated differentiation matrices $\hat{\fnt{\bm{Q}}}^i$
\begin{gather*}
\LRp{\fnt{V}_q}_{ij} = \phi_j(\bm{x}_i), \qquad \LRp{\fnt{V}_f}_{ij} = \phi_j\LRp{\bm{x}^f_i},\\
\fnt{M} = \fnt{V}_q^T\fnt{W}\fnt{V}_q, \qquad \fnt{W} = \diag{\bm{w}}, \qquad (\hat{\fnt{Q}}_{i})_{jk} = \int_{\hat{D}} \pd{\phi_k}{\hat{x}_i} \phi_j.
\end{gather*}
Finally, we introduce inner product notation on an element $D^k$
\[
\LRp{u,v}_{D^k} = \int_{D^k} u(\bm{x})v(\bm{x})\diff{x}, \qquad \LRa{u,v}_{\partial D^k} = \int_{\partial D^k} u(\bm{x})v(\bm{x})\diff{x}
\]
as well as over the entire domain $\Omega$ and its boundary $\partial \Omega$
\[
\LRp{u,v}_{\Omega} = \sum_k \LRp{u,v}_{D^k}, \qquad \LRa{u,v}_{\partial \Omega} = \sum_k \LRa{u,v}_{\partial D^k \cap \partial \Omega}.
\]
In all numerical experiments, integrals are computed via quadrature approximations, which in turn induces discrete $L^2$ inner products which approximate continuous $L^2$ inner products over $D^k, \partial D^k$. Because the following proofs only use properties of quadrature-based $L^2$ inner products and do not assume exact integration, all theoretical results also hold under inexact quadrature.

\subsection{Discretization of inviscid terms}

For most numerical methods, the continuous identity (\ref{eq:fluxidentity}) for the inviscid fluxes does not hold at the semi-discrete level. To address this issue, the inviscid terms are discretized using a ``flux differencing'' approach involving summation-by-parts (SBP) operators and entropy conservative fluxes \cite{tadmor1987numerical}. We briefly review the construction of entropy stable methods for the inviscid case. 

We introduce the quadrature-based projection matrix $\fnt{P}_q= \fnt{M}^{-1}\fnt{V}_q^T\fnt{W}$. Using $\fnt{P}_q$ and $\hat{\fnt{Q}}_i$, we can construct quadrature-based differentiation and extrapolation matrices $\fnt{Q}^i, \fnt{E}$
\[
\fnt{Q}_i = \fnt{P}_q^T\hat{\fnt{Q}}_i\fnt{P}_q, \qquad \fnt{E} = \fnt{V}_f\fnt{P}_q.
\]
To accomodate general quadrature rules (e.g., both with and without boundary points), we introduce hybridized SBP operators. Let 
\[
\fnt{B}_i = \diag{\fnt{w}_f \circ \hat{\fnt{n}}_i}, \qquad \fnt{W}_f = \diag{\fnt{w}_f },
\]
where $\fnt{w}_f$ is a vector of face quadrature points and $\hat{\fnt{n}}_i$ is a vector containing values of the $i$th scaled normal component $\hat{n}_i \hat{J}_f$ at surface quadrature points. Then, the hybridized SBP operator $\fnt{Q}_{i,h}$ on the reference element $\hat{D}$ is defined as
\[
\fnt{Q}_{i,h} = \frac{1}{2}\bmat{
\fnt{Q}_i - \LRp{\fnt{Q}_i}^T & \fnt{E}^T\fnt{B}_i\\
\fnt{B}_i\fnt{E}  & \fnt{B}_i
}.
\]
We can construct operators $\fnt{Q}^k_{i,h}$ on each physical element $D^k$ as follows
\[
\fnt{Q}^k_{i,h} = \sum_{j=1}^d \fnt{G}^k_{ij} \fnt{Q}_{j,h}, 
\]
where $\fnt{G}^k_{ij}$ are diagonal matrices containing the scaled geometric terms $J \pd{\hat{x}_j}{x_i}$ Here, $\hat{x}_j$ and $x_i$ denote the $j$th and $i$th reference and physical coordinates  (\ref{eq:coords}). We also introduce physical boundary matrices 
\[
\fnt{B}^k_i = \fnt{W}_f \diag{\fnt{n}_i \circ \fnt{J}_f^k}, 
\]
where $ \fnt{n}_i, \fnt{J}^k_f$ are vectors containing values of $n_i$ and $J^k_f$ at surface quadrature points. 

We now introduce entropy conservative numerical fluxes $\bm{f}_{i,S}(\bm{u}_L,\bm{u}_R)$ \cite{tadmor1987numerical}, which are bivariate functions of ``left'' and ``right'' states $\bm{u}_L, \bm{u}_R$. In addition to being symmetric and consistent, entropy conservative numerical fluxes satisfy an ``entropy conservation'' property
\eqlab{
\LRp{\bm{v}_L-\bm{v}_R}^T\bm{f}_{i,S}(\bm{u}_L,\bm{u}_R) = \psi_i(\bm{u}_L) - \psi_i(\bm{u}_R).
\label{eq:ecflux}
}

The inviscid flux derivatives are approximated using a ``flux differencing'' approach. We first introduce the $L^2$ projection of the entropy variables and the ``entropy projected'' conservative variables $\tilde{\fnt{u}}$
\[
\fnt{v} = \fnt{P}_q \bm{v}\LRp{\fnt{V}_q\fnt{u}}, \qquad \tilde{\fnt{u}} = \bm{u}\LRp{\fnt{V}_h\fnt{v}}, 
\]
which are defined by evaluating the mapping from entropy to conservative variables using the projected entropy variables. Note that the projected entropy variables $\fnt{v}$ is a vector corresponding to modal coefficients, while $\tilde{\fnt{u}}$ corresponds to point values at volume and face quadrature points. 

Then, $\pd{\bm{f}_i(\bm{u})}{x_i}$ on an element $D^k$ is discretized by 
\[
\pd{\bm{f}_i(\bm{u})}{x_i} \Longleftrightarrow \fnt{V}_h^T\LRp{2\fnt{Q}^k_{i,h} \circ \fnt{F}_i}\fnt{1}, \qquad \LRp{\fnt{F}_i}_{jk} = \bm{f}_{i,S}(\bm{u}_i,\bm{u}_j).
\]
where $\circ$ denotes the matrix Hadamard product. Since the entries of $\fnt{F}_i$ are vector-valued, the Hadamard product $\LRp{2\fnt{Q}^k_{i,h} \circ \fnt{F}_i}$ should be understood as each scalar entry of $2\fnt{Q}^k_{i,h}$ multiplying each component of each vector-valued entry of ${\fnt{F}_i}$.

Finally, let $\tilde{\fnt{u}}^+$ denote the values of $\tilde{\fnt{u}}$ on a neighboring element $D^{k,+}$. The inviscid discretization is completed by specifying interface fluxes which couple neighboring elements together, such that an entropy stable inviscid scheme over each element $D^k$ is
\[
\fnt{M}\td{\fnt{u}}{t} + \sum_{i=1}^d \LRs{\fnt{V}_h^T\LRp{2\fnt{Q}^k_{i,h} \circ \fnt{F}_i}\fnt{1} + \fnt{V}_f^T \LRp{\fnt{B}^k_i \LRp{\bm{f}_{i,S}\LRp{\tilde{\fnt{u}}^+,\tilde{\fnt{u}}} - \bm{f}_i(\fnt{u})}}}  - \fnt{V}_f^T \fnt{W}_f\frac{\lambda}{2}\jump{\tilde{\fnt{u}}}= 0.
\]
Here, we have added a simple entropy dissipative Lax-Friedrichs penalization term, where $\lambda$ is the maximum of the wavespeed between the exterior and interior solution states $\tilde{\fnt{u}}^+$ and $\tilde{\fnt{u}}$. Other penalization terms such as HLLC and certain matrix penalizations \cite{chen2017entropy, winters2017uniquely} also dissipate entropy. 

All that remains for the implementation of the scheme is to specify the entropy conservative numerical fluxes $\bm{f}_S\LRp{\bm{u}_L,\bm{u}_R}$. All experiments in this paper utilize the entropy conservative and kinetic energy preserving numerical fluxes of Chandrashekar \cite{chandrashekar2013kinetic}. These fluxes utilize the logarithmic mean, which is computed in a numerically stable manner using the expansion derived in \cite{winters2019entropy}. 

\begin{remark}
While we have presented entropy stable DG schemes using a general ``modal'' DG framework, the formulation reduces to existing methods under appropriate choices of quadrature and basis. For example, specifying Gauss-Lobatto quadrature on a tensor product element recovers entropy stable spectral collocation schemes \cite{chan2018efficient}. SBP discretizations without an underlying basis on simplices \cite{hicken2016multidimensional, chen2017entropy, crean2018entropy} can also be recovered for appropriate quadrature rules by setting $\fnt{V}_q\fnt{P}_q = \fnt{I}$ \cite{wu2020high}.
\end{remark}

\subsubsection{Entropy stable imposition of inviscid wall conditions}
\label{sec:inviscidBCs}

In the inviscid case, no-slip (no normal flow) boundary conditions are imposed at solid walls \cite{svard2014entropy, chen2017entropy}. These boundary conditions are consistent with all wall boundary conditions considered in this paper, and are imposed by enforcing 
\eqlab{
\rho^+ = \rho, \qquad {u}_n^+ = -u_n, \qquad u_\tau = u_\tau, \qquad p^+ = p.
\label{eq:invisicdwalls}
}
where $u_n, u_\tau$ denote the normal and tangential components of the velocity. Explicit expressions for $u_n, u_\tau$ in 2D are given by
\eq{
u_n &= u_1n_1 + u_2n_2\\
u_\tau &= u_1n_2 - u_2n_1. 
}
It was shown in \cite{svard2014entropy, chen2017entropy} that boundary contributions to the entropy balance equation (\ref{eq:entropybalance}) vanish under the imposition of reflective boundary conditions (\ref{eq:invisicidwalls}). For the remainder of this paper, we will assume that all viscous wall boundary conditions are paired with (and consistent with) these inviscid wall conditions.

\subsection{Discretization of viscous terms}

We discretize the symmetrized viscous terms (\ref{eq:symvisc}) using a local DG formulation \cite{cockburn1998local, zakerzadeh2017entropy}, which is similar to the formulations introduced for nonlinear elliptic PDEs in \cite{cockburn1999some}. We note that, while we presented the inviscid discretization using matrix notation, we utilize a variational formulation more familiar to finite element methods to describe the discretization of the viscous terms. 

We begin by introducing $\bm{\Theta}$, which are DG approximations of the gradients of the entropy variables.  Let $\bm{w}_{1,i} \in \LRs{P^N\LRp{\hat{D}}}^{4}$ denote vector-valued test functions for $i = 1,\ldots, d$. The variational definition of $\bm{\Theta}$ is then given by 
\eqlab{
\LRp{\bm{\Theta}_i, \bm{w}_{1,i}}_{D^k} = \LRp{ \pd{\bm{v}}{x_i},\bm{w}_{1,i}}_{D^k} + \frac{1}{2}\LRa{\jump{\bm{v}}n_{i},\bm{w}_{1,i}}_{\partial D^k}, \qquad i = 1,\ldots,d \label{eq:ldg1}.
}
The terms $\bm{\Theta}_1, \bm{\Theta}_2$ are approximations of derivatives with respect to $x_1, x_2$ of the entropy variables $\bm{v}_i$. In the next step, we compute $\bm{\sigma}_i$  as the $L^2$ projection of $\sum_{j=1}^d \bm{K}_{ij}\bm{\Theta}_j$ for $i = 1,2$ onto the approximation space of each element
\eqlab{
\LRp{\bm{\sigma}_i,\bm{w}_{2,i}}_{D^k} = \LRp{\sum_{j=1}^d\bm{K}_{ij}\bm{\Theta}_j, \bm{w}_{2,i}}_{D^k}, \qquad i = 1,\ldots, d \label{eq:ldg2}
}
for all $\bm{w}_{2,i} \in \LRs{P^N\LRp{\hat{D}}}^4$. Note that $\bm{\sigma}_i$ is an approximation to the viscous flux functions $\bm{g}_i$ in the compressible Navier-Stokes equations (\ref{eq:cns}) and (\ref{eq:cnsflux}). 

We can now approximate the divergence of $\bm{\sigma}$ via $\bm{g}_{\rm visc}$. 
Let $\bm{\tau}_{\rm visc}$ be a positive semi-definite penalty matrix which is single-valued over each element interface, which we will specify later. Then, the divergence of the viscous fluxes is approximated by $\bm{g}_{\rm visc}$ as
\eqlab{
\LRp{\bm{g}_{\rm visc}, \bm{w}_{3}}_{D^k} 
= \sum_{i=1}^d \LRs{\LRp{ -\bm{\sigma}_i, \pd{\bm{w}_{3}}{x_i}}_{D^k} + \LRa{\avg{\bm{\sigma}_i}n_{i},\bm{w}_{3}}_{\partial D^k}} - \LRa{\bm{\tau}_{\rm visc}\jump{\bm{v}},\bm{w}_3}_{\partial D^k},   \label{eq:ldg3}
}
for $i = 1,\ldots,d$ and for all $\bm{w}_3 \in \LRs{P^N(\hat{D})}^4$. This approximation can be shown to be positive semi-definite in the following sense
\begin{lemma}
\label{lemma:visc}
Let $\bm{g}_{\rm visc}$ be defined by (\ref{eq:ldg1}), (\ref{eq:ldg2}), and (\ref{eq:ldg3}). For periodic boundary conditions, the viscous entropy dissipation satisfies $\LRp{\bm{g}_{\rm visc}, \bm{v}} \leq 0$. 
\end{lemma}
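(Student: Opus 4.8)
The plan is the classical local discontinuous Galerkin energy argument: test the three defining relations \eqref{eq:ldg1}--\eqref{eq:ldg3} against carefully chosen members of the approximation space, add the results, and watch the interface terms collapse. First I would set $\bm w_3 = \bm v$ in \eqref{eq:ldg3} and sum over all elements $D^k$, so that $\LRp{\bm g_{\rm visc},\bm v}_\Omega$ becomes the sum of the volume contributions $-\sum_i\LRp{\bm\sigma_i, \pd{\bm v}{x_i}}_{D^k}$, the central interface contributions $\sum_i\LRa{\avg{\bm\sigma_i}n_i, \bm v}_{\partial D^k}$, and the penalty contributions $-\LRa{\bm\tau_{\rm visc}\jump{\bm v}, \bm v}_{\partial D^k}$, summed over $k$.

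Next I would remove the volume term in favour of $\bm\Theta$. Because $\bm\sigma_i\in\LRs{P^N\LRp{\hat D}}^4$ (it is an $L^2$ projection, by \eqref{eq:ldg2}), it is an admissible test function in \eqref{eq:ldg1}; choosing $\bm w_{1,i} = \bm\sigma_i$ gives $\LRp{\bm\sigma_i, \pd{\bm v}{x_i}}_{D^k} = \LRp{\bm\sigma_i, \bm\Theta_i}_{D^k} - \frac12\LRa{\jump{\bm v}n_i, \bm\sigma_i}_{\partial D^k}$. Likewise $\bm\Theta_i\in\LRs{P^N\LRp{\hat D}}^4$ is admissible in \eqref{eq:ldg2}, and setting $\bm w_{2,i}=\bm\Theta_i$ and summing over $i$ yields $\sum_i\LRp{\bm\sigma_i,\bm\Theta_i}_{D^k} = \sum_{i,j}\LRp{\bm\Theta_i, \bm K_{ij}\bm\Theta_j}_{D^k}$. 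Stacking $\bm\Theta_1,\ldots,\bm\Theta_d$ into a single vector, the integrand $\sum_{i,j}\bm\Theta_i^T\bm K_{ij}\bm\Theta_j$ is pointwise non-negative because $\bm K\succeq 0$, hence so is its positive-weight quadrature approximation, and therefore $\sum_i\LRp{\bm\sigma_i,\bm\Theta_i}_{D^k}\geq 0$. Substituting, the volume part of $\LRp{\bm g_{\rm visc},\bm v}_\Omega$ becomes $-\sum_k\sum_{i,j}\LRp{\bm\Theta_i,\bm K_{ij}\bm\Theta_j}_{D^k}\leq 0$, at the price of the extra interface term $+\frac12\sum_i\LRa{\jump{\bm v}n_i,\bm\sigma_i}_{\partial D^k}$.

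What remains is to show the assembled surface terms are non-positive. I would organise the sum face by face. On each interior face the combination $\jump{\bm v}n_i$ is single-valued, as is $\avg{\bm\sigma_i}$, whereas the outward normal $n_i$ and the interior traces of $\bm v$ and $\bm\sigma_i$ swap between the two adjacent elements; a short computation then shows that the ``central'' pieces $\frac12\jump{\bm v}n_i\cdot\bm\sigma_i + \avg{\bm\sigma_i}n_i\cdot\bm v$ cancel identically, which is simply consistency of the central (BR1-type) numerical flux. The penalty pieces combine, on each interior face, to $-\LRa{\bm\tau_{\rm visc}\jump{\bm v},\jump{\bm v}}$, which is non-positive because $\bm\tau_{\rm visc}$ is positive semi-definite. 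Under periodic boundary conditions every face is an interior face, so this accounting is complete and $\LRp{\bm g_{\rm visc},\bm v}_\Omega = -\sum_k\sum_{i,j}\LRp{\bm\Theta_i,\bm K_{ij}\bm\Theta_j}_{D^k} - \sum_{\rm faces}\LRa{\bm\tau_{\rm visc}\jump{\bm v},\jump{\bm v}}\leq 0$.

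I expect the surface bookkeeping in the last step to be the only delicate part: keeping the orientation of $n_i$ and the interior/exterior traces straight so that the average-flux terms cancel exactly and the penalty term assembles with the dissipative sign. Everything else---selecting $\bm\sigma_i$ and $\bm\Theta_i$ as test functions, and invoking $\bm K\succeq 0$---is routine, and since the argument uses only properties of (positive-weight) quadrature-based $L^2$ inner products, it remains valid under inexact integration.
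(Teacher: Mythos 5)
Your proposal is correct and follows essentially the same route as the paper's proof: testing (\ref{eq:ldg3}) with $\bm{v}$, (\ref{eq:ldg1}) with $\bm{\sigma}_i$, and (\ref{eq:ldg2}) with $\bm{\Theta}_i$ (both admissible since they lie in $\LRs{P^N}^4$), cancelling the central interface terms by swapping traces across each interior face, and invoking $\bm{K}\succeq 0$ together with the semi-definiteness of $\bm{\tau}_{\rm visc}$. One bookkeeping remark: with the minus sign as printed in (\ref{eq:ldg3}) and the convention $\jump{\bm{v}}=\bm{v}^+-\bm{v}$, the face-assembled penalty actually comes out as $+\LRa{\bm{\tau}_{\rm visc}\jump{\bm{v}},\jump{\bm{v}}}$, so your claimed dissipative sign tacitly uses the intended (opposite) sign of the penalty term---which is exactly the convention the paper's own proof adopts in its first display, where that term enters with a plus sign.
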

\begin{proof}
The proof is similar to those of \cite{cockburn1999some, bustinza2004local, zakerzadeh2017entropy}. Let $\bm{w}_3 = \bm{v}$, $\bm{w}_{1,i} = \bm{\sigma}_i$, and $\bm{w}_2 = \bm{\theta}$. Then, summing up (\ref{eq:ldg1}), (\ref{eq:ldg2}) and using  (\ref{eq:ldg3}) yield
\eq{
\LRp{\bm{g}_{\rm visc}, \bm{v}}_{D^k} &= \sum_{i=1}^d \LRs{\LRp{ -\bm{\sigma}_i, \pd{\bm{v}}{x_i}}_{D^k} + \LRa{\avg{\bm{\sigma}_i}n_{i},\bm{v}}_{\partial D^k}} + \LRa{\bm{\tau}_{\rm visc}\jump{\bm{v}},\bm{v}}_{\partial D^k}\\
\sum_{i=1}^d\LRp{\bm{\Theta}_i, \bm{\sigma}_i}_{D^k} &= \sum_{i=1}^d\LRp{ \pd{\bm{v}}{x_i},\bm{\sigma}_i}_{D^k} + \frac{1}{2}\LRa{\jump{\bm{v}}n_{i},\bm{\sigma}_i}_{\partial D^k} ,\\
\sum_{i=1}^d \LRp{\bm{\sigma}_i,\bm{\Theta}_i}_{D^k} &= \sum_{j=1}^d \LRp{\bm{K}_{ij}\bm{\Theta}_j, \bm{\Theta}_i}_{D^k}.
}
We sum over all elements $D^k$, substitute the second equation into the first one, and use the third equation to yield
\eqlab{
\LRp{\bm{g}_{\rm visc}, \bm{v}}_{D^k} = \sum_{i,j=1}^d \LRs{-\LRp{\bm{K}_{ij}\bm{\Theta}_j, \bm{\Theta}_i}_{D^k}  + \frac{1}{2}\LRa{\jump{\bm{v}}n_{j},\bm{\sigma}_j}_{\partial D^k} +  \LRa{\avg{\bm{\sigma}_j}n_{j},\bm{v}}_{\partial D^k}} + \LRa{\bm{\tau}_{\rm visc}\jump{\bm{v}},\bm{v}}_{\partial D^k}. \label{eq:entropydiss}
}
What remains is to show that the surface terms vanish when summed up over all elements. For periodic boundary conditions, all faces are ``interior'' faces shared by two elements. We split contributions from each surface term and swap them between $D^k$ and the neighboring element $D^{k,+}$, such that 
\eq{
\sum_k \frac{1}{2}\LRa{\jump{\bm{v}}n_{i},\bm{\sigma}_i}_{\partial D^k} &= \frac{1}{2} \sum_k \LRp{\frac{1}{2}\LRa{\jump{\bm{v}}n_{i},\bm{\sigma}_i}_{\partial D^k} + \frac{1}{2}\LRa{\jump{\bm{v}}n_{i},\bm{\sigma}_i^+}_{\partial D^{k,+}}} = \frac{1}{2} \sum_k \LRa{\jump{\bm{v}}n_{i},\avg{\bm{\sigma}_i}}_{\partial D^k}\\
\sum_k \LRa{\avg{\bm{\sigma}_i}n_{i},\bm{v}}_{\partial D^k} &= \frac{1}{2} \sum_k \LRp{\LRa{\avg{\bm{\sigma}_i}n_{i},\bm{v}}_{\partial D^k} - \LRa{\avg{\bm{\sigma}_i}n_{i},\bm{v}^+}_{\partial D^{k,+}}}= -\frac{1}{2} \sum_k\LRa{\avg{\bm{\sigma}_i}n_i, \jump{\bm{v}}}_{\partial D^k}\\
\sum_k  \LRa{\bm{\tau}_{\rm visc}\jump{\bm{v}},\bm{v}}_{\partial D^k} &= \frac{1}{2} \sum_k \LRp{ \LRa{\bm{\tau}_{\rm visc}\jump{\bm{v}},\bm{v}}_{\partial D^k} - \LRa{\bm{\tau}_{\rm visc}\jump{\bm{v}},\bm{v}^+}_{\partial D^{k,+}}}=   -\frac{1}{2}\LRa{\bm{\tau}_{\rm visc}\jump{\bm{v}},\jump{\bm{v}}}_{\partial D^k}
}
Here, we have used that both $n_i$ and $\jump{\bm{v}}$ change sign between $D^k$ and $D^{k,+}$. Thus, the surface terms cancel, and by the positive semi-definiteness of $\bm{K}_{ij}$, 
\[
\sum_k \LRp{\bm{g}_{\rm visc}, \bm{v}}_{D^k} = \sum_k \sum_{i,j=1}^d -\LRp{\bm{K}_{ij}\bm{\Theta}_j, \bm{\Theta}_i}_{D^k} -\frac{1}{2}\LRa{\bm{\tau}_{\rm visc}\jump{\bm{v}},\jump{\bm{v}}}_{\partial D^k} \leq 0
\]
since $\bm{\tau}_{\rm visc}$ is a positive semi-definite matrix.
\end{proof}

\begin{remark}
This proof uses only properties of $L^2$ inner products, which are preserved if all inner products and integals are computed using quadrature. The negative semi-definite structure is also preserved on curved meshes with spatially varying geometric terms if either an SBP property holds or if the derivative lies on the test function in (\ref{eq:ldg1}) and the derivative lies on $\bm{\sigma}_i$ in (\ref{eq:ldg3}). In the former case, the extension to curved meshes is essentially the same as in \cite{fernandez2019extension}, while for the latter case the treatment of viscous terms resembles that of the ``strong-weak'' formulation for DG discretizations of symmetric wave equations \cite{warburton2013low}.
\end{remark}

\section{Entropy stable imposition of wall boundary conditions}
\label{sec:wbc}

We now turn our focus to the entropy stable imposition of adiabatic and isothermal no-slip wall boundary conditions for the compressible Navier-Stokes equations, as well as the entropy stable treatment of slip boundary conditions. Boundary conditions are imposed by choosing appropriate exterior states $\tilde{\bm{u}}^+, \bm{v}^+$ such that the contributions from the boundary terms in the proof of Lemma~\ref{lemma:visc} reduce to appropriate quantities \cite{arnold2002unified, hesthaven2007nodal, brenner2007mathematical}. 

Let $\LRa{u,v}_{\partial\Omega} = \int_{\partial\Omega} uv$ denote the inner product on the domain boundary $\partial \Omega$. For the following proofs we will assume that $\bm{\tau}_{\rm visc} = 0$ on $\partial \Omega$, and postpone the discussion of entropy-dissipative boundary penalization matrices to Section~\ref{sec:bpenmat}. Then, the total viscous entropy contribution is 
\eqlab{
\sum_k \LRp{\bm{g}_{\rm visc}, \bm{v}}_{D^k} = \LRp{\sum_k \sum_{i,j=1}^d -\LRp{\bm{K}_{ij}\bm{\Theta}_j, \bm{\Theta}_i}_{D^k}} + \sum_{i=1}^d \LRs{\frac{1}{2} \LRa{\jump{\bm{v}}n_{i},\bm{\sigma}_i}_{\partial\Omega} +  \LRa{\avg{\bm{\sigma}_i}n_i, \bm{v}}_{\partial\Omega}}. \label{eq:bterms}
}
Our goal will be to construct exterior states for which the discrete viscous entropy-dissipative terms (\ref{eq:bterms}) mimic the continuous viscous entropy dissipative terms in (\ref{eq:continentropy1}).

In the following sections, , we will refer to individual components of the viscous fluxes $\bm{\sigma}_i$ by
\[
\LRp{\bm{\sigma}_i}_j = \sigma_{j,i}, \qquad j = 1,\ldots, d,
\]
for consistency with the $\tau_{i,j}$ notation in (\ref{eq:taustress}). We will also restrict ourselves to the two-dimensional case $d=2$ for simplicity of presentation. Recall that the conservative variables are $\rho, u_1, u_2, E$, and the entropy variables correspond to
\[
v_1 = \frac{\rho e (\gamma + 1 - s) - E}{\rho e}, \qquad v_{2}= \frac{\rho {{u}_1}}{\rho e}, \qquad v_{3}= \frac{\rho {{u}_2}}{\rho e}, \qquad v_{4} = -\frac{\rho}{\rho e}.
\]
The extension to $d=3$ involves straightforward modifications to account for the $z$-component of the normal vector and velocity vector.

\subsection{Adiabatic no-slip wall boundary conditions}

Adiabatic no-slip wall conditions impose zero normal velocity conditions, velocity conditions, and an ``entropy flow'' condition on the temperature gradient through the wall
\[
u_n = 0, \qquad \bm{u}_{\tau} = \bm{u}_{\rm wall}, \qquad \kappa\pd{T}{n}\frac{1}{T} = g(t). 
\]
where $u_n = u_1n_1 + u_2n_2$ and $u_{\tau} = u_1n_2 - u_2n_1$ in 2D. For simplicity of notation, we will convert boundary conditions on normal and tangential components to boundary conditions on velocity in each coordinate direction
\[
u_i = u_{i,{\rm wall}}, \qquad i = 1,\ldots, d.
\]

The terms which naturally appear in the DG formulation involve only traces of entropy variables and approximations of the viscous fluxes. However, we can impose no-slip velocity conditions by noting that the entropy variables $v_{2},\ldots,v_{1+d}$ in (\ref{eq:evars}) are the components of the velocity $u_i$ scaled by $e^{-1}$, and that $v_{4} = -1/e$. Then, the velocity boundary conditions can equivalently be imposed as
\[
v_{1+i} = \frac{u_{i,\rm wall} }{e} = -u_{i,\rm wall} v_{4}, \qquad i = 1,\ldots, d.
\]
We impose these conditions by specifying the exterior states
\eqlab{
v_{1+i}^+ = -2u_{i,\rm wall} v_{4} -v_{1+i}, \qquad i = 1,\ldots, d \label{eq:vbc}
}
such that $\avg{v_{1+i}} = u_{i,\rm wall} v_{4}$. 

We now consider the adiabatic wall condition. Note that the variables $\bm{\sigma}_i$ in (\ref{eq:ldg2}) are approximations to the viscous fluxes $\bm{g}_i$ in (\ref{eq:cnsflux}), which include the heat flux in the last component of $\bm{g}_i$. In two dimensions, the definitions of $\sigma_{i,j}$ correspond to
\eq{
\sigma_{2,i} &= \tau_{1,i}\\
\sigma_{3,i} &= \tau_{2,i}\\
\sigma_{4,i} &= \tau_{1,i}u_1+\tau_{2,i}u_2 - \kappa \pd{T}{x_i}, \qquad i = 1,\ldots,2.
}
We impose adiabatic wall boundary conditions for by specifying $\sigma_{4,i}^+$ as
\eqlab{
\sigma_{4,i}^+ = 2 \LRp{u_{1,{\rm wall}} \sigma_{2,i} + u_{2,{\rm wall}} \sigma_{3,i} + \frac{c_vg(t)n_i}{v_4}} - \sigma_{4,i} , \label{eq:bc4}
}
such that the average of $\sigma_{4,i}$ incorporates wall velocities and heat entropy flow into the formula for the viscous energy flux
\[
\avg{\sigma_{4,i}} = u_{1,{\rm wall}} \sigma_{2,i} + u_{2,{\rm wall}} \sigma_{3,i} + \frac{c_vg(t)n_i}{v_4}.
\]

Finally, since no boundary conditions are imposed on $\sigma_{2,i}, \sigma_{3,i}$, and $v_4$, we simply set the exterior values equal to the interior values for $i = 1,2$
\eqlab{
\sigma_{2,i}^+ = \sigma_{2,i}, \qquad \sigma_{3,i}^+ = \sigma_{3,i}, \qquad v_4^+ = v_4
\label{eq:bc_extrap}
}
such that the average quantities are $\avg{\sigma_{j,i}} = \sigma_{j,i}$  for $j=2, 3$ and $\avg{v_4} = v_4$. Note that $\bm{v}_1^+$ can be arbitrarily chosen since $\sigma_{1,i} = 0$ due to the fact that the corresponding rows of $\bm{K}_{ij}$ are zero. Based on these exterior states, we have the following theorem: 
\begin{theorem}
\label{thm:adiabatic}
Let $\bm{g}_{\rm visc}$ denote viscous contributions from (\ref{eq:ldg1}), (\ref{eq:ldg2}), and (\ref{eq:ldg3}). If adiabatic no-slip wall boundary conditions are imposed using exterior states for $i = 1,\ldots,d$ in $d=2$ dimensions
\eq{
v_{1+i}^+ &= -2u_{i,\rm wall} v_4 -v_{1+i}\\
v_4^+ &= v_4\\
\sigma_{2,i}^+ &= \sigma_{2,i},\\ 
\sigma_{3,i}^+ &= \sigma_{3,i},\\
\sigma_{4,i}^+ &= 2 \LRp{u_{1,{\rm wall}} \sigma_{2,i} + u_{2,{\rm wall}} \sigma_{3,i} + \frac{c_vg(t)n_i}{v_4}} - \sigma_{4,i} ,
}
then the viscous contribution $\bm{g}_{\rm visc}$ mimics the entropy balance such that
\[
\sum_k \LRp{\bm{g}_{\rm visc},\bm{v}}_{D^k} = \sum_{i=1}^d\LRa{c_v g(t),1}_{\partial \Omega} - \sum_k \LRp{\sum_{i,j=1}^d \LRp{\bm{K}_{ij}\bm{\Theta}_j, \bm{\Theta}_i}_{D^k}}.
\]
%then the boundary terms in (\ref{eq:bterms}) reduce to
%\[
%\sum_{i=1}^d \LRs{\frac{1}{2} \LRa{\jump{\bm{v}}n_{i},\bm{\sigma}_i}_{\partial\Omega} +  \LRa{\avg{\bm{\sigma}_i}n_i, \bm{v}}_{\partial\Omega}} = \LRa{g(t),1}_{\partial \Omega}. % - \sum_k \LRp{\sum_{i,j=1}^d \LRp{\bm{K}_{ij}\bm{\Theta}_j, \bm{\Theta}_i}_{D^k}} - \frac{\tau}{2}\LRp{\LRa{\jump{v_2},\jump{v_2}}_{\partial \Omega} +\LRa{\jump{v_3},\jump{v_3}}_{\partial \Omega}}
%\]
\end{theorem}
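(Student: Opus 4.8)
The plan is to begin from the decomposition (\ref{eq:bterms}), which --- under the standing assumption $\bm{\tau}_{\rm visc}=0$ on $\partial\Omega$ --- already separates the non-positive volume term $-\sum_k\sum_{i,j=1}^d\LRp{\bm{K}_{ij}\bm{\Theta}_j,\bm{\Theta}_i}_{D^k}$ from a boundary remainder
\[
R_\partial=\sum_{i=1}^d\LRs{\tfrac{1}{2}\LRa{\jump{\bm{v}}n_i,\bm{\sigma}_i}_{\partial\Omega}+\LRa{\avg{\bm{\sigma}_i}n_i,\bm{v}}_{\partial\Omega}},
\]
where $\bm{\sigma}_i,\bm{v}$ are interior traces, $\jump{\bm{v}}=\bm{v}^+-\bm{v}$, and $\avg{\bm{\sigma}_i}=\tfrac{1}{2}(\bm{\sigma}_i^++\bm{\sigma}_i)$ are built from the prescribed exterior data. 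The theorem then reduces to showing $R_\partial=\LRa{c_v g(t),1}_{\partial\Omega}$. Because in this formulation the exterior data is imposed directly on the entropy variables $\bm{v}$ and on the flux approximations $\bm{\sigma}_i$ --- rather than through a change to primitive variables --- evaluating $R_\partial$ is a purely pointwise algebraic computation on $\partial\Omega$, which is the simplification this formulation provides over \cite{parsani2015entropy, svard2018entropy, dalcin2019conservative}.

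First I would expand $R_\partial$ componentwise in $\LRc{v_1,\ldots,v_4}$ and $\LRc{\sigma_{1,i},\ldots,\sigma_{4,i}}$. Since the first row --- hence, by symmetry of $\bm{K}$, the first column --- of every block $\bm{K}_{ij}$ vanishes, the interior trace has $\sigma_{1,i}\equiv0$; taking $\sigma_{1,i}^+=0$ as well, the $j=1$ contributions of both inner products in $R_\partial$ drop out, and in particular the unconstrained value $v_1^+$ plays no role. What survives, for each fixed $i$ and pointwise on $\partial\Omega$, is the integrand
\[
n_i\LRs{\tfrac{1}{2}\LRp{\jump{v_2}\sigma_{2,i}+\jump{v_3}\sigma_{3,i}+\jump{v_4}\sigma_{4,i}}+\avg{\sigma_{2,i}}v_2+\avg{\sigma_{3,i}}v_3+\avg{\sigma_{4,i}}v_4}.
\]

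Next I would substitute the prescribed exterior states. From (\ref{eq:vbc}) and (\ref{eq:bc_extrap}) one reads off $\jump{v_{1+i}}=-2u_{i,\rm wall}v_4-2v_{1+i}$ for $i=1,2$, $\jump{v_4}=0$, $\avg{\sigma_{2,i}}=\sigma_{2,i}$, $\avg{\sigma_{3,i}}=\sigma_{3,i}$, while (\ref{eq:bc4}) gives $\avg{\sigma_{4,i}}=u_{1,\rm wall}\sigma_{2,i}+u_{2,\rm wall}\sigma_{3,i}+\frac{c_v g(t)n_i}{v_4}$. Inserting these: the $\jump{v_4}\sigma_{4,i}$ term disappears, the factor $\tfrac{1}{2}$ halves the doubled jump contributions, and the four cross terms $v_2\sigma_{2,i}$, $v_3\sigma_{3,i}$, $u_{1,\rm wall}v_4\sigma_{2,i}$, $u_{2,\rm wall}v_4\sigma_{3,i}$ cancel in pairs between the jump part and the average parts; the sole survivor is $\frac{c_v g(t)n_i}{v_4}\cdot v_4=c_v g(t)n_i$, so direction $i$ contributes $c_v g(t)n_i^2$ pointwise, i.e.\ $R_\partial=\sum_{i=1}^d\LRa{c_v g(t)n_i^2,1}_{\partial\Omega}$. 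Since the outward normal is a unit vector, $\sum_{i=1}^d n_i^2=1$, so $R_\partial=\LRa{c_v g(t),1}_{\partial\Omega}$; substituting this into (\ref{eq:bterms}) completes the proof.

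I expect the only real obstacle to be the bookkeeping in the substitution step: the exterior states (\ref{eq:vbc})--(\ref{eq:bc4}) are reverse-engineered precisely so that $\tfrac{1}{2}\jump{v_j}\sigma_{j,i}+\avg{\sigma_{j,i}}v_j$ telescopes, and the two things one must track carefully are (i) that $\jump{v_4}=0$, so $\sigma_{4,i}$ enters only through $\avg{\sigma_{4,i}}v_4$ and never through a jump term, and (ii) carrying the contribution $\frac{c_v g(t)n_i}{v_4}$ correctly through the multiplication by $v_4$ (which removes the $1/v_4$) and by the outer $n_i$ (which, after summing over directions, collapses via $\sum_i n_i^2=1$). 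Everything else is routine algebra, and since only properties of the quadrature-based $L^2$ inner products enter, the identity remains valid under inexact integration, exactly as for Lemma~\ref{lemma:visc}.
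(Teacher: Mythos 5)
Your proposal is correct and follows essentially the same route as the paper: both start from the decomposition in (\ref{eq:bterms}) with $\bm{\tau}_{\rm visc}=0$ on $\partial\Omega$, substitute the prescribed exterior states so that $\tfrac12\jump{v_{1+i}}=-u_{i,\rm wall}v_4-v_{1+i}$ and $\jump{v_4}=0$, observe the pairwise cancellation of the $v_2\sigma_{2,i}$, $v_3\sigma_{3,i}$, $u_{1,\rm wall}v_4\sigma_{2,i}$, $u_{2,\rm wall}v_4\sigma_{3,i}$ terms, and collapse the surviving $c_v g(t)n_i^2$ contribution via $\sum_i n_i^2=1$. Your explicit remarks that $\sigma_{1,i}\equiv 0$ renders $v_1^+$ irrelevant and that the identity survives inexact quadrature are consistent with (and slightly more detailed than) the paper's own argument.
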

\begin{proof}
Plugging the exterior values into the boundary terms in (\ref{eq:bterms}) simplify to
\eq{
\sum_{i=1}^d \LRs{\frac{1}{2} \LRa{\jump{\bm{v}}n_{i},\bm{\sigma}_i}_{\partial\Omega} +  \LRa{\avg{\bm{\sigma}_i}n_i, \bm{v}}_{\partial\Omega}} &= 
\sum_{i=1}^d \LRa{-u_{1,{\rm wall}}v_4 - v_2, \sigma_{2,i}n_i}_{\partial\Omega} + \LRa{\sigma_{2,i} n_i, v_2}_{\partial\Omega} \\
&+\sum_{i=1}^d\LRa{-u_{2,{\rm wall}}v_4 - v_3, \sigma_{3,i}n_i}_{\partial\Omega} + \LRa{\sigma_{3,i} n_i, v_3}_{\partial\Omega} \\
&+\sum_{i=1}^d\LRa{u_{1,{\rm wall}} \sigma_{2,i} + u_{2,{\rm wall}} \sigma_{3,i} + \frac{c_vg(t)n_i}{v_4}, v_4 n_i}_{\partial\Omega}\\
&= \LRa{c_vg(t),1}_{\partial\Omega}.
}
\end{proof}

As noted in \cite{dalcin2019conservative}, if $g(t) = 0$, then the boundary term resulting from Theorem~\ref{thm:adiabatic} vanishes and the resulting discretization is entropy stable. 

\subsection{Isothermal no-slip wall conditions}
Isothermal no-slip wall boundary conditions impose tangential wall velocity conditions and a fixed temperature at the wall
\[
u_i = u_{i,{\rm wall}}, \quad i = 1,\ldots, d, \qquad T = T_{\rm wall}. 
\]
To impose $T = T_{\rm wall}$, we use that $v_4 = -1/e = -1/(c_vT)$ and set the exterior state $v_4^+$ as
\[
v_4^+ = -\frac{2}{c_vT_{\rm wall}} - v_4, \qquad \sigma_{4,i}^+ = \sigma_{4,i}, \quad i = 1,2,
\]
such that $\avg{v_4} = -1/(c_vT_{\rm wall})$ and $\avg{\sigma_{4,i}} = \sigma_{4,i}$. We also incorporate $T_{\rm wall}$ into the exterior values $v_2^+, v_3^+$. We have the following theorem on entropy stability of isothermal wall boundary conditions:
\begin{theorem}
\label{thm:isothermal}
Let $\bm{g}_{\rm visc}$ denote viscous contributions from (\ref{eq:ldg1}), (\ref{eq:ldg2}), and (\ref{eq:ldg3}). If isothermal no-slip wall boundary conditions are imposed by setting the exterior states for $i = 1,\ldots,d$ in $d=2$ dimensions
\eq{
v_{1+i}^+ &= \frac{2 u_{i,\rm wall}}{c_vT_{\rm wall}}  -v_{1+i}\\
v_4^+ &= -\frac{2}{c_vT_{\rm wall}} - v_4\\
\sigma_{2,i}^+ &= \sigma_{2,i},\\ 
\sigma_{3,i}^+ &= \sigma_{3,i},\\
\sigma_{4,i}^+ &= \sigma_{4,i},
}
then the viscous contribution $\bm{g}_{\rm visc}$ mimics the entropy balance such that
\[
\sum_k \LRp{\bm{g}_{\rm visc},\bm{v}}_{D^k} = \sum_{i=1}^d\LRa{\frac{q_n}{c_vT_{\rm wall}},1}_{\partial \Omega} - \sum_k \LRp{\sum_{i,j=1}^d \LRp{\bm{K}_{ij}\bm{\Theta}_j, \bm{\Theta}_i}_{D^k}}.
\]
Here, we have introduced the normal heat flux $q_n = \sum_{i=1}^d q_in_i$, where $q_i$ is defined as
\[
q_i = -\sigma_{4,i}+u_{1,{\rm wall}} \sigma_{2,i}+u_{2,{\rm wall}} \sigma_{3,i} \approx -\kappa\pd{T}{x_i}.
\]
\end{theorem}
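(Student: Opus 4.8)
The plan is to mimic the structure of the proof of Theorem~\ref{thm:adiabatic}, plugging the isothermal exterior states into the boundary terms in~(\ref{eq:bterms}) and showing that everything collapses to the claimed normal heat flux term. First I would record the relevant averages and jumps implied by the prescribed exterior states: from $v_{1+i}^+ = \frac{2u_{i,\rm wall}}{c_vT_{\rm wall}} - v_{1+i}$ and $v_4^+ = -\frac{2}{c_vT_{\rm wall}} - v_4$ we get $\avg{v_{1+i}} = \frac{u_{i,\rm wall}}{c_vT_{\rm wall}}$ and $\avg{v_4} = -\frac{1}{c_vT_{\rm wall}}$; and from $\sigma_{j,i}^+ = \sigma_{j,i}$ for $j=2,3,4$ we get $\avg{\sigma_{j,i}} = \sigma_{j,i}$ and $\jump{\sigma_{j,i}} = 0$. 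I would also note $\jump{v_{1+i}} = \frac{2u_{i,\rm wall}}{c_vT_{\rm wall}} - 2v_{1+i}$ and $\jump{v_4} = -\frac{2}{c_vT_{\rm wall}} - 2v_4$, though it is cleaner to write $\frac12\jump{v} = \avg{v} - v$ throughout, so that each boundary integrand is expressed purely in terms of interior traces and wall data.

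Next I would expand the boundary sum $\sum_{i=1}^d\LRs{\frac12\LRa{\jump{\bm v}n_i,\bm\sigma_i}_{\partial\Omega} + \LRa{\avg{\bm\sigma_i}n_i,\bm v}_{\partial\Omega}}$ component by component over the four entropy/flux components. The first component contributes nothing since $\sigma_{1,i}=0$. For components $j=2,3$, the two pieces combine as $\LRa{(\avg{v_{1+j-1}} - v_{1+j-1})\,\sigma_{j,i}n_i,1} + \LRa{\sigma_{j,i}n_i\,v_{1+j-1},1}$, wait — more carefully, using $\frac12\jump{v_{j}} = \avg{v_j} - v_j$, the $j$th contribution is $\LRa{(\avg{v_j}-v_j)\sigma_{j,i}n_i + \avg{\sigma_{j,i}}n_i v_j,1}_{\partial\Omega} = \LRa{\avg{v_j}\,\sigma_{j,i}n_i,1}_{\partial\Omega}$ since $\avg{\sigma_{j,i}}=\sigma_{j,i}$. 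So the $j=2$ term gives $\sum_i\LRa{\frac{u_{1,\rm wall}}{c_vT_{\rm wall}}\sigma_{2,i}n_i,1}_{\partial\Omega}$ and the $j=3$ term gives $\sum_i\LRa{\frac{u_{2,\rm wall}}{c_vT_{\rm wall}}\sigma_{3,i}n_i,1}_{\partial\Omega}$. For $j=4$, since $\avg{\sigma_{4,i}}=\sigma_{4,i}$, the contribution is likewise $\sum_i\LRa{\avg{v_4}\sigma_{4,i}n_i,1}_{\partial\Omega} = \sum_i\LRa{-\frac{1}{c_vT_{\rm wall}}\sigma_{4,i}n_i,1}_{\partial\Omega}$.

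Collecting the three surviving pieces, the boundary sum equals $\sum_i\LRa{\frac{1}{c_vT_{\rm wall}}\LRp{-\sigma_{4,i} + u_{1,\rm wall}\sigma_{2,i} + u_{2,\rm wall}\sigma_{3,i}}n_i,1}_{\partial\Omega}$, which by the definition $q_i = -\sigma_{4,i} + u_{1,\rm wall}\sigma_{2,i} + u_{2,\rm wall}\sigma_{3,i}$ and $q_n = \sum_i q_i n_i$ is exactly $\LRa{\frac{q_n}{c_vT_{\rm wall}},1}_{\partial\Omega}$. Substituting this into~(\ref{eq:bterms}) (with $\bm\tau_{\rm visc}=0$ on $\partial\Omega$) yields the claimed identity, the remaining volume term being the manifestly non-positive $-\sum_k\sum_{i,j}\LRp{\bm K_{ij}\bm\Theta_j,\bm\Theta_i}_{D^k}$.

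The calculation is essentially bookkeeping; the only subtlety — and the place I would be most careful — is the sign and pairing in the identity $\frac12\jump{\bm v} = \avg{\bm v} - \bm v$ together with the fact that the no-slip condition is being imposed on $v_2,v_3$ through $\avg{v_{1+i}}$ while the temperature condition enters through $\avg{v_4}$; one must check that the cross terms between the velocity-type and energy-type components do not appear, which holds precisely because $\avg{\sigma_{j,i}} = \sigma_{j,i}$ for all $j\in\{2,3,4\}$ kills the $-v_j$ part of $\frac12\jump{v_j}$ cleanly. I would also remark, as the authors do after Theorem~\ref{thm:adiabatic}, that when the physical solution is well-resolved $q_i \approx -\kappa\,\partial T/\partial x_i$, so the boundary term approximates $\LRa{\frac{1}{c_vT}\kappa\,\partial T/\partial n,1}_{\partial\Omega}$, matching the viscous boundary contribution in the continuous entropy balance~(\ref{eq:continentropy1}).
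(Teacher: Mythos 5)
Your proposal is correct and follows essentially the same route as the paper's proof: plug the prescribed exterior states into the boundary terms of (\ref{eq:bterms}), note $\avg{\sigma_{j,i}}=\sigma_{j,i}$ so the interior-trace pieces cancel component by component, and collect the surviving averages $\avg{v_2},\avg{v_3},\avg{v_4}$ into $\LRa{q_n/(c_vT_{\rm wall}),1}_{\partial\Omega}$. The bookkeeping, including the vanishing of the first component via $\sigma_{1,i}=0$ and the use of $\tfrac12\jump{\bm v}=\avg{\bm v}-\bm v$, matches the paper's argument, so no further changes are needed.
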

\begin{proof}
Under this choice of exterior states, the boundary terms in (\ref{eq:bterms}) simplify to
\eq{
\sum_{i=1}^d \LRs{\frac{1}{2} \LRa{\jump{\bm{v}}n_{i},\bm{\sigma}_in_i}_{\partial\Omega} +  \LRa{\avg{\bm{\sigma}_i}n_i, \bm{v}}_{\partial\Omega}} &= 
\sum_{i=1}^d \LRa{\frac{u_{1,\rm wall}}{c_vT_{\rm wall}} - v_2, \sigma_{2,i}n_i}_{\partial\Omega} + \LRa{\sigma_{2,i} n_i, v_2}_{\partial\Omega} \\
&+\sum_{i=1}^d\LRa{\frac{u_{2,\rm wall}}{c_vT_{\rm wall}} - v_3, \sigma_{3,i}n_i}_{\partial\Omega} + \LRa{\sigma_{3,i} n_i, v_3}_{\partial\Omega} \\
&+\sum_{i=1}^d\LRa{-\frac{1}{c_vT_{\rm wall}} - v_4, \sigma_{4,i} n_i}_{\partial\Omega} + \LRa{\sigma_{4,i}, v_4 n_i}_{\partial\Omega}\\
&= \sum_{i=1}^d \LRa{\underbrace{-\sigma_{4,i}+u_{1,{\rm wall}} \sigma_{2,i}+u_{2,{\rm wall}} \sigma_{3,i}}_{q_i}, \frac{1}{c_vT_{\rm wall}}n_i}_{\partial\Omega}\\
&=  \LRa{\frac{q_n}{c_vT_{\rm wall}}, 1}_{\partial\Omega}.
}
\end{proof}
Since the wall temperature $T_{\rm wall}$ is assumed to be positive, this boundary contribution does not vanish and the resulting discretization cannot be proven to be entropy stable. However, the boundary contribution mimics the boundary terms in the continuous entropy balance equation (\ref{eq:continentropy1}), which do not vanish at isothermal walls. 

\subsection{Reflective wall (symmetry) conditions}

Finally, we consider reflective boundary conditions, which are the viscous extension of the reflective wall boundary conditions for the inviscid case of the compressible Euler equations \cite{svard2014entropy, chen2017entropy, hindenlang2019stability}. In the context of viscous flows, these boundary conditions can be used to enforce symmetry conditions or free surfaces. Recall from Section~\ref{sec:inviscidBCs} that inviscid reflective wall boundary conditions are enforced by setting exterior values for the convective flux  
\[
\rho^+ = \rho, \qquad {u}_n^+ = -u_n, \qquad u_\tau = u_\tau, \qquad p^+ = p
\]
We note that these conditions correspond to continuous boundary conditions on the normal velocity and normal heat flux 
\eqlab{
u_n = 0, \qquad \kappa\pd{T}{n} = 0.
\label{eq:continBCs}
}

Let $n_1, n_2$ denote the components of the unit normal vector. The conditions on normal velocity imply that the velocity reduces to its tangential component. This is enforced by setting
\[
{u_i}^+ = u_i - 2u_n n_i,\qquad i = 2, \ldots, d+1,
\]
such that $\sum_{i=1}^d\avg{u_i} n_i = 0$ and the normal component of the averaged velocity vanishes. 

Since $e = c_vT > 0$ for $T > 0$, the second and third entropy variables $v_2,v_3 = u/e, v/e$ are well-defined. Thus, reflective wall boundary conditions are also equivalent to the following conditions on the second and third entropy variables
\begin{equation}
{v_i}^+ = v_i - 2v_n n_i, 
\label{eq:bcV}
\end{equation}
where $v_n = v_2 n_1 + v_3 n_2$ in 2D. 

We now consider viscous contributions. Note that $\avg{{\sigma}_{1,j}} = 0$ since there is no mass diffusion, and terms involving ${\sigma}_1$ vanish. We thus begin by considering fields corresponding to $i = 2,3$. Using (\ref{eq:bcV}), the boundary terms involving $\jump{{v}_i}$ for $i=2,3$ in (\ref{eq:bterms}) can be expanded out as
\begin{gather*}
\sum_{i=1,2} \LRa{\frac{1}{2}\jump{{v}_{1+i}}, \sum_{j=1}^d{\sigma}_{1+i,j} n_j} 
= \frac{1}{2}\LRa{{v}_n, \sum_{i,j=1}^d{\sigma}_{1+i,j} n_i n_j}_{\partial \Omega}
= \frac{1}{2}\LRa{v_2n_1 + v_3n_2, \sum_{i,j=1}^d{\sigma}_{1+i,j} n_i n_j}_{\partial \Omega}.
\end{gather*}
We can write this in matrix form using the unit normal vector $\bm{n}= [n_1, n_2]^T$ 
\eqlab{
 \frac{1}{2}\LRa{v_2n_1 + v_3n_2, \sum_{i,j=1}^d{\sigma}_{1+i,j} n_i n_j} = \frac{1}{2}\LRa{\bmat{v_2\\v_3} \cdot \bm{n}, \bm{n}^T \LRp{\bmat{\sigma_{2,1} & \sigma_{2,2}\\ \sigma_{3,1} & \sigma_{3,2}} \bm{n}}}_{\partial \Omega} 
 \label{eq:v23contrib}
}
Recall that in 2D, the boundary contributions involving $\avg{\sigma_{i,j}}$ are
\[
\sum_{i=1,2} \LRa{\sum_{j=1}^d \avg{\sigma_{1+i,j}} n_j, v_{1+i}}_{\partial \Omega}  = \frac{1}{2}\LRa{\bmat{v_2\\v_3}, \bmat{\avg{\sigma_{2,1}} & \avg{\sigma_{2,2}} \\ \avg{\sigma_{3,1}} & \avg{\sigma_{3,2}}} \bm{n}}_{\partial \Omega}.
\]
These contributions will cancel with (\ref{eq:v23contrib}) if the tangential-normal component of the stress vanishes. This condition is equivalent to the stress on the boundary reducing to the normal-normal component 
\[
\bmat{\avg{\sigma_{2,1}} & \avg{\sigma_{2,2}} \\ \avg{\sigma_{3,1}} & \avg{\sigma_{3,2}}} \bm{n}
 = \bm{n}\bm{n}^T {\bmat{\sigma_{2,1} & \sigma_{2,2}\\ \sigma_{3,1} & \sigma_{3,2}} \bm{n}}.
\]
This implies that  $\avg{\sigma_{1+i,j}}$ can be expressed in terms of the normal stress
\eqlab{
\avg{\sigma_{1+i,j}} &= n_j \sigma_{n,j}\\
\sigma_{n,j} &= \sum_{i=1}^d\sigma_{1+i,j}n_i, \qquad i = 1,\ldots, d \label{eq:normalstress}.
}
Finally, we consider contributions involving $v_4$ and $\sigma_{4,i}$
\eqlab{
\sum_{j=1}^d \LRa{\avg{\sigma_{4,j}}n_j,v_4}_{\partial \Omega} + \LRa{\frac{1}{2}\jump{v_4},\sigma_{4,j}n_j}_{\partial \Omega}.
\label{eq:bcsym}
}
Since $v_4 = -1/e$ and $\rho^+, p^+ = \rho, p$ from the inviscid wall boundary conditions, we set the exterior state ${v}_4^+ = {v}_4$.  The remaining boundary term in (\ref{eq:bcsym}) vanishes as well if also we impose ${\sigma}_{4,i}^+ = -{\sigma}_{4,i}$. We note that this corresponds to a zero normal heat flux condition $\kappa\pd{T}{n} = 0$. Recall from (\ref{eq:bc4}) that $\sigma_{4,i}$ corresponds to $\sigma_{4,i} = u_1 \sigma_{2,i} + u_2 \sigma_{3,i} - \kappa\pd{T}{x_i}$, such that 
\[
\sum_{j=1}^d \sigma_{4,j}n_j = \LRp{\sum_{i,j=1}^d u_i \sigma_{i+1,j}n_j} - \kappa\pd{T}{n}.
\]
Recall from (\ref{eq:normalstress}) that the stress reduces to the normal-normal stress on the boundary. We rewrite this using the velocity vector $\bm{u} = [u_1, u_2]^T$ such that 
\eq{
\sum_{i,j=1}^d u_i \sigma_{i+1,j}n_j 
= \bmat{u_1\\u_2}^T \bm{n}\bm{n}^T
\bmat{\sigma_{2,1} & \sigma_{2,2}\\
\sigma_{3,1} & \sigma_{3,2}}\bm{n}
= \LRp{\bm{u}^T\bm{n}}\LRp{\bm{n}^T
\bmat{\sigma_{2,1} & \sigma_{2,2}\\
\sigma_{3,1} & \sigma_{3,2}}\bm{n}} = 0
}
since $\LRp{\bm{u}^T\bm{n}} = u_n = 0$ by the reflective wall boundary condition (\ref{eq:continBCs}). 
Thus, on reflective (symmetry) boundaries, the contributions involving $\sigma_{4,j}$ correspond to 
\[
\sum_{j=1}^d \sigma_{4,j}n_j =  - \kappa\pd{T}{n}
\]
such that ${\sigma}_{4,i}^+ = -{\sigma}_{4,i}$ imposes a zero adiabatic wall condition. We summarize this as follows:
\begin{theorem}
\label{thm:sym}
Let $\bm{g}_{\rm visc}$ denote viscous contributions from (\ref{eq:ldg1}), (\ref{eq:ldg2}), and (\ref{eq:ldg3}). Let $v_n$ be analogous to the normal velocity, such that $v_n$  and the normal stresses $\sigma_{n,j}$ are defined as
\[
v_n = \sum_{i=1}^d v_{1+i} n_i, \qquad \sigma_{n,j} = \sum_{i=1}^d\sigma_{1+i,j}n_i, \qquad i = 1,\ldots, d.
\]
In $d=2$ dimensions, if reflective (symmetry) boundary conditions are imposed by setting the exterior states for $i = 1, \ldots, d$ 
\eq{
v_{1+i}^+ &= v_{1+i} - 2v_n n_i\\
v_4^+ &= v_4\\
\sigma_{1+i,j}^+ &= 2n_i\sigma_{n,j} - \sigma_{1+i,j}\\ 
\sigma_{4,i}^+ &= -\sigma_{4,i},
}
then the viscous contribution $\bm{g}_{\rm visc}$ mimics the entropy balance such that
\[
\sum_k \LRp{\bm{g}_{\rm visc},\bm{v}}_{D^k} = - \sum_k \LRp{\sum_{i,j=1}^d \LRp{\bm{K}_{ij}\bm{\Theta}_j, \bm{\Theta}_i}_{D^k}}.
\]
Moreover, these exterior states correspond to imposing zero normal flow, zero tangential-normal stress, and zero adiabatic wall conditions. 
\end{theorem}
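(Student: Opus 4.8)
The plan is to reduce the theorem to the boundary decomposition already in place. Since $\bm{\tau}_{\rm visc} = 0$ on $\partial\Omega$, equation~(\ref{eq:bterms}) gives
\[
\sum_k \LRp{\bm{g}_{\rm visc},\bm{v}}_{D^k} = -\sum_k\sum_{i,j=1}^d\LRp{\bm{K}_{ij}\bm{\Theta}_j,\bm{\Theta}_i}_{D^k} + \mathcal{B}, \qquad \mathcal{B} := \sum_{i=1}^d\LRs{\frac{1}{2}\LRa{\jump{\bm{v}}n_i,\bm{\sigma}_i}_{\partial\Omega} + \LRa{\avg{\bm{\sigma}_i}n_i,\bm{v}}_{\partial\Omega}},
\]
so it suffices to show $\mathcal{B} = 0$ for the prescribed exterior states. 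First I would expand the vector inner products component-wise as $\mathcal{B} = \sum_{i=1}^d\sum_{m=1}^{4}\LRs{\frac{1}{2}\LRa{\jump{v_m}n_i,\sigma_{m,i}}_{\partial\Omega} + \LRa{\avg{\sigma_{m,i}}n_i,v_m}_{\partial\Omega}}$ and treat the blocks $m=1$, $m=4$, and $m\in\LRc{2,3}$ separately. The $m=1$ block vanishes because $\sigma_{1,i}\equiv 0$ (the first row of every $\bm{K}_{ij}$ is zero, so there is no mass diffusion and $v_1^+$ is irrelevant). The $m=4$ block vanishes because $v_4^+ = v_4$ and $\sigma_{4,i}^+ = -\sigma_{4,i}$ yield $\jump{v_4} = 0$ and $\avg{\sigma_{4,i}} = 0$ pointwise on $\partial\Omega$.

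The only block requiring work is $m = 1+k$, $k = 1,\dots,d$. From $v_{1+k}^+ = v_{1+k} - 2 v_n n_k$ I get $\jump{v_{1+k}} = -2 v_n n_k$, and from $\sigma_{1+k,i}^+ = 2 n_k\sigma_{n,i} - \sigma_{1+k,i}$ I get $\avg{\sigma_{1+k,i}} = n_k\sigma_{n,i}$. Substituting, the jump part of the block becomes $-\sum_{i,k}\LRa{v_n n_k n_i,\sigma_{1+k,i}}_{\partial\Omega}$, which on contracting $\sum_k n_k\sigma_{1+k,i} = \sigma_{n,i}$ equals $-\sum_i\LRa{v_n n_i,\sigma_{n,i}}_{\partial\Omega}$; the average part becomes $\sum_{i,k}\LRa{n_k\sigma_{n,i}n_i,v_{1+k}}_{\partial\Omega}$, which on contracting $\sum_k n_k v_{1+k} = v_n$ equals $\sum_i\LRa{\sigma_{n,i}n_i,v_n}_{\partial\Omega}$. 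These two are negatives of each other by symmetry of the $L^2$ boundary product, so they cancel and $\mathcal{B} = 0$, which yields the stated identity. Note that this cancellation uses only the definitions of $v_n,\sigma_{n,i}$ and symmetry of the boundary pairing; $|\bm{n}| = 1$ is not needed here.

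For the concluding sentence I would verify the three conditions directly. Averaging $v_{1+i}^+ = v_{1+i} - 2 v_n n_i$ gives $\sum_i\avg{v_{1+i}}n_i = v_n - v_n|\bm{n}|^2 = 0$; since $v_{1+i} = u_i/e$ and $v_4^+ = v_4$ forces $e^+ = e$, this is exactly $\avg{u_n} = 0$ (consistent with $u_n^+ = -u_n$ from Section~\ref{sec:inviscidBCs}), i.e.\ zero normal flow. The identity $\avg{\sigma_{1+i,j}} = n_i\sigma_{n,j}$ makes the averaged momentum-stress matrix $\bmat{\avg{\sigma_{2,1}} & \avg{\sigma_{2,2}}\\ \avg{\sigma_{3,1}} & \avg{\sigma_{3,2}}}$ rank one with column space $\mathrm{span}\,\bm{n}$, so its traction with $\bm{n}$ is parallel to $\bm{n}$: the tangential--normal (shear) stress is zero. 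Finally, using $\sigma_{4,j} = u_1\sigma_{2,j} + u_2\sigma_{3,j} - \kappa\,\partial T/\partial x_j$, the fact that the averaged momentum stress reduces to its normal--normal part, and $u_n = 0$, the velocity--stress part of $\sum_j\sigma_{4,j}n_j$ drops out, so $\avg{\sigma_{4,j}} = 0$ is equivalent to $\kappa\,\partial T/\partial n = 0$, the zero (adiabatic) normal heat flux condition.

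The main obstacle is organizational rather than analytic: one must check that the reflection $v_{1+i}^+ = v_{1+i} - 2 v_n n_i$ in the entropy variables is precisely the ``conjugate'' of the reflection $\sigma_{1+i,j}^+ = 2 n_i\sigma_{n,j} - \sigma_{1+i,j}$ in the stresses, so that the bilinear boundary pairing $\frac{1}{2}\jump{\bm{v}}\cdot\bm{\sigma} + \avg{\bm{\sigma}}\cdot\bm{v}$ is annihilated --- the viscous analogue of the inviscid mechanism in which $u_n^+ = -u_n$ kills the inviscid boundary entropy flux. Everything else is elementary trace algebra, and the argument transfers verbatim to inexact quadrature since it uses only symmetry of discrete $L^2$ inner products.
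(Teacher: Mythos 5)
Your proof is correct and follows essentially the same route as the paper: you substitute the reflected exterior states into the boundary terms of (\ref{eq:bterms}), use $\jump{v_{1+k}} = -2 v_n n_k$ and $\avg{\sigma_{1+k,i}} = n_k \sigma_{n,i}$ so that the jump and average contributions contract (via $\sum_k n_k \sigma_{1+k,i} = \sigma_{n,i}$ and $\sum_k n_k v_{1+k} = v_n$) to equal and opposite multiples of $\LRa{v_n, \sum_i \sigma_{n,i} n_i}_{\partial\Omega}$, while the $m=1$ and $m=4$ blocks vanish trivially, exactly as in the paper's derivation preceding the theorem. The only difference is presentational---the paper phrases the cancellation through the normal--normal projection of the stress matrix, whereas your index computation is equivalent and keeps the sign and factor bookkeeping more explicit.
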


\subsection{Entropy dissipative boundary penalization matrices}
\label{sec:bpenmat}

Since boundary conditions are imposed weakly, it can be useful to penalize the deviation of the solution from the boundary data. To do so, we modify the penalization matrix $\bm{\tau}_{\rm visc}$ on boundary faces. The resulting matrix is non-symmetric on boundary faces in order to account for the fact that $\bm{\tau}_{\rm visc}$ now only incorporates contributions from one element, as opposed to interior interfaces which include contributions from both an element and its neighbor.

Let $\bm{\tau}_{\rm visc}$ be defined on boundary faces as
\eqlab{
\bm{\tau}_{\rm visc} = 
%-\tau \bmat{
%0&&&\\
%&-1&&\frac{\avg{v_2}}{v_4}\\
%&&-1&\frac{\avg{v_3}}{v_4}\\
%&\frac{\avg{v_2}}{v_4}&\frac{\avg{v_3}}{v_4}& 0\\
%}
\tau\bmat{
0&&&\\
&-1&&\\
&&-1&\\
&\frac{\avg{v_2}}{v_4}&\frac{\avg{v_3}}{v_4}& \frac{\jump{v_4}}{2v_4}\\
}
\label{eq:penmat}
}
where $\tau \geq 0$ is a scalar penalization parameter. 

Note that division by $v_4$ is well-defined if the temperature $T > 0$ since $v_4 = -1/T < 0$. We then have the following result:
\begin{theorem}
\label{thm:pen}
Let $\bm{\tau}_{\rm visc}$ be given by (\ref{eq:penmat}). Then, the penalty term is entropy dissipative in that
\[
-\LRa{\bm{\tau}_{\rm visc}\jump{\bm{v}},\bm{v}}_{\partial \Omega} = -\frac{\tau}{2}\LRp{\LRa{\jump{v_2},\jump{v_2}}_{\partial \Omega} + \LRa{\jump{v_3},\jump{v_3}}_{\partial \Omega}+ \LRa{\jump{v_4},\jump{v_4}}_{\partial \Omega}} \leq 0.
\]
\end{theorem}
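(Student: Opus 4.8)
The plan is to verify the stated identity by a direct pointwise computation on each boundary face, then integrate and use $\tau\ge 0$. The key observation is that on a boundary face only the interior element contributes to the surface term (unlike interior faces, where one symmetrizes and picks up a factor of one half), so the quantity to analyze is exactly $\LRa{\bm{\tau}_{\rm visc}\jump{\bm v},\bm v}_{\partial\Omega}$ with $\bm v$ the interior trace, and the nonstandard last row of $\bm{\tau}_{\rm visc}$ in (\ref{eq:penmat}) is engineered so that this term decouples cleanly.

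First I would write out, using (\ref{eq:penmat}),
\[
\bm{\tau}_{\rm visc}\jump{\bm v} = \tau\bmat{0\\ -\jump{v_2}\\ -\jump{v_3}\\ \dfrac{\avg{v_2}}{v_4}\jump{v_2} + \dfrac{\avg{v_3}}{v_4}\jump{v_3} + \dfrac{\jump{v_4}}{2v_4}\jump{v_4}},
\]
which is well defined because $v_4<0$ whenever $T>0$. Taking the inner product with $\bm v = (v_1,v_2,v_3,v_4)^T$, the factors of $v_4$ in the denominators of the fourth row cancel against the fourth component of $\bm v$, leaving
\[
\bm v^T\bm{\tau}_{\rm visc}\jump{\bm v} = \tau\LRp{-v_2\jump{v_2} - v_3\jump{v_3} + \avg{v_2}\jump{v_2} + \avg{v_3}\jump{v_3} + \tfrac12\jump{v_4}^2}.
\]

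Next I would invoke the elementary identity $\avg{w}-w = \tfrac12\jump{w}$, valid since $w$ here is the interior trace, to get $-v_2\jump{v_2}+\avg{v_2}\jump{v_2} = \tfrac12\jump{v_2}^2$ and likewise for $v_3$. Hence $\bm v^T\bm{\tau}_{\rm visc}\jump{\bm v} = \tfrac{\tau}{2}\LRp{\jump{v_2}^2+\jump{v_3}^2+\jump{v_4}^2}$ pointwise on $\partial\Omega$. Integrating over $\partial\Omega$ and inserting the overall minus sign yields the claimed expression for $-\LRa{\bm{\tau}_{\rm visc}\jump{\bm v},\bm v}_{\partial\Omega}$, and since $\tau\ge 0$ and each squared jump is nonnegative, this quantity is $\le 0$.

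There is no real obstacle here; the only thing requiring care is bookkeeping — confirming that the $\bm v$ paired against $\bm{\tau}_{\rm visc}\jump{\bm v}$ is the interior trace (so that $\avg{v_i}-v_i=+\tfrac12\jump{v_i}$), and recognizing that the $v_4$-scaled entries in the last row of $\bm{\tau}_{\rm visc}$ exist precisely so they cancel with the fourth entropy variable and turn an otherwise-coupled expression into independent contributions from $v_2$, $v_3$, and $v_4$. As in Lemma~\ref{lemma:visc}, the argument uses only properties of $L^2$ inner products, so it remains valid when all integrals are replaced by quadrature.
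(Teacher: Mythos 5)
Your proof is correct and takes essentially the same route as the paper: expand $\bm{\tau}_{\rm visc}\jump{\bm v}$ from (\ref{eq:penmat}), let the $v_4$ factors in the last row cancel against the fourth entropy variable, and use $\avg{v_i}-v_i=\tfrac12\jump{v_i}$ to reduce everything to squared jumps, which is exactly the algebra the paper performs term by term. The only (cosmetic) difference is that you carry out the cancellation pointwise and note the identity holds for arbitrary exterior states, whereas the paper phrases it as ``plugging in'' the boundary values before grouping the same terms.
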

\begin{proof}
Plugging in the values for ${v}_2^+,{v}_3^+,{v}_4^+$, the penalty term reduces to
\eqlab{
-\LRa{\bm{\tau}_{\rm visc}\jump{\bm{v}},\bm{v}}_{\partial \Omega} &= -\tau\LRp{\LRa{-\jump{v_2},v_2}_{\partial \Omega} + \LRa{-\jump{v_3},v_3}_{\partial \Omega}} \label{eq:tmp1}\\
&-\tau \LRa{\jump{v_2}\frac{\avg{v_2}}{v_4} + \jump{v_3}\frac{\avg{v_3}}{v_4},v_4}_{\partial \Omega} \label{eq:tmp2} \\
&-\tau \LRa{\frac{\jump{v_4}}{v_4}\jump{v_4},v_4}_{\partial \Omega} \nonumber
}
Omitting $\tau$ for now, the final term reduces to $\LRa{\frac{\jump{v_4}}{2v_4}\jump{v_4},v_4}_{\partial \Omega} = \frac{1}{2} \LRa{\jump{v_4},\jump{v_4}}_{\partial \Omega}$, while the third term reduces to
\[
\LRa{\jump{v_2}\frac{\avg{v_2}}{v_4} + \jump{v_3}\frac{\avg{v_3}}{v_4},v_4}_{\partial \Omega} = 
\LRa{\jump{v_2},\avg{v_2}}+ \LRa{\jump{v_3},\avg{v_3}}_{\partial \Omega}.
\]
Using this, adding together (\ref{eq:tmp1}) and (\ref{eq:tmp2}) then yields
\eq{
\LRa{\bm{\tau}_{\rm visc}\jump{\bm{v}},\bm{v}}_{\partial \Omega}
&= -\tau\LRs{\LRa{\jump{v_2},\avg{v_2}-v_2}_{\partial \Omega} + \LRa{\jump{v_3},\avg{v_3}-v_3}_{\partial \Omega}}\\
&= -\frac{\tau}{2}\LRs{\LRa{\jump{v_2},\jump{v_2}}_{\partial \Omega} + \LRa{\jump{v_3},\jump{v_3}}_{\partial \Omega}}.
}
\end{proof}

\begin{remark}
When imposing heat entropy flux wall boundary conditions in Theorem~\ref{thm:adiabatic}, $\jump{v_4} = 0$ and no penalization is applied to the component $v_4 = -1/T$. 
\end{remark}

\begin{remark}
We can relate $\bm{\tau}_{\rm visc}$ to the choice of penalty matrix in \cite{dalcin2019conservative} if $\tau \propto -1/v_4 = e > 0$ and $\mu = 1, \lambda = 0$. For our numerical experiments, we choose $\tau = -\frac{1}{\LRp{\rm Re} v_4} > 0$, which mimics the scaling with respect to Reynolds number and $T$ of the penalization introduced in \cite{dalcin2019conservative}. However, a simpler choice of $\tau$ as an $O(1)$ constant does not produce significantly different results for the numerical experiments reported in this work. 
\label{remark:tau}
\end{remark}

\section{Numerical experiments}
\label{sec:num}
In this section, we present numerical experiments which verify the theoretical results proven in this work. All numerical experiments utilize the adaptive $5$th order Dormand-Prince time integration \cite{dormand1980family} to advance the solution forward in time. 

Unless specified otherwise, all numerical experiments utilize a Lax-Friedrichs penalization \cite{chen2017entropy, chan2017discretely}, where the maximum wavespeed is estimated as the maximum of the wavespeeds evaluated at the left and right states \cite{davis1988simplified}. For viscous interior dissipation, we simply take $\bm{\tau}_{\rm visc}$ to be 
\[
\bm{\tau}_{\rm visc} = \tau \bmat{0 &\\
& \bm{I}_{(d+1)\times (d+1)}},
\]
where $\tau$ is a scalar value as discussed in Remark~\ref{remark:tau}. This results in an entropy dissipation which is proportional to the norms of $\jump{v_2}, \jump{v_3}, \jump{v_4}$ over each element interface.

\subsection{Verification of accuracy}

We begin by testing convergence of the difference between the numerical solution and the imposed boundary conditions. Recall that for DG methods, the boundary conditions are imposed weakly, such that the solution does not satisfy the boundary conditions exactly. We examine convergence of the solution to zero wall boundary conditions for a simple periodic channel setup on $[-2,2]\times [-1,1]$. Periodic boundary conditions are imposed in the $x$ direction and zero adiabatic wall boundary conditions are imposed on the top and bottom walls. Simulations are run until final time $T_{\rm final} = .5$ and compute the $L^2$ error (in other words, the $L^2$ norm of the $x$ and $y$ velocities) for the initial conditions
\[
\rho = 1, \qquad u_1 = \frac{1}{10}\sin\LRp{\frac{\pi x}{2}}\cos\LRp{\frac{\pi y}{2}}, \qquad
u_2 = \frac{1}{10}\cos\LRp{\frac{\pi x}{2}}\sin \LRp{\pi y}, \qquad p = \frac{1}{{\rm Ma}^2\gamma}.
\]
We utilize ${\rm Ma} = .1$ and ${\rm Re} = 50$. The boundary penalization described in Theorem~\ref{thm:pen} is also applied. Each mesh is constructed by subdividing a quadrilateral mesh of $2K_{\rm 1D} \times K_{\rm 1D}$ elements to produce a triangular mesh. Moreover, to ensure that viscous effects near the boundary did not impact convergence, we utilized graded meshes constructed by transforming the $y$-coordinates a uniform triangular mesh via $\tilde{y} = y + .25 \sin(\pi y)$ (see Figure~\ref{fig:convergence}). 
\begin{figure}[!h]
\centering
\includegraphics[width=.33\textwidth]{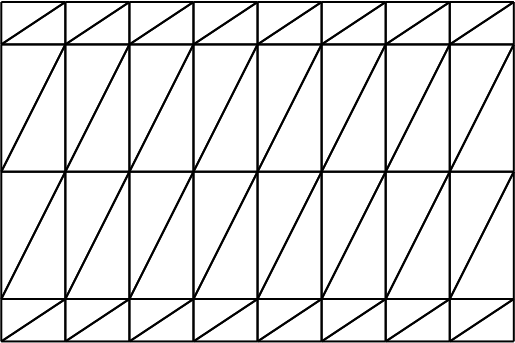}
\hspace{4em}
\includegraphics[width=.33\textwidth]{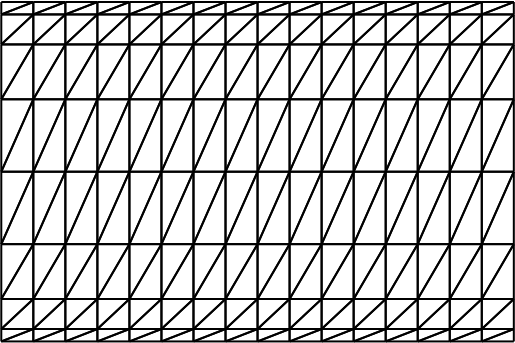}
\caption{Examples of two meshes in the sequence of meshes used for the convergence study. }
\label{fig:convergence}
\end{figure}

Table~\ref{table:convergence} shows computed errors $e_{\rm wall} = \LRp{\int_{\partial \Omega_{\rm wall}} u^2 + v^2 }^{1/2}$ for the velocity on the wall boundary $\partial \Omega_{\rm wall}$ at $y = \pm 1$. We observe asymptotic convergence rates between $O(h^{N+1})$ and $O(h^{N+2})$. We note that these rates are slightly higher than the optimal $O(h^{N+1})$ $L^2$ rate of convergence (which was observed in \cite{parsani2015entropy} for zero no-slip boundary conditions) due to the fact that the exact velocity is zero and is exactly representable by the DG approximation space. We also performed additional experiments which suggest that removing boundary penalization does not affect numerical behavior significantly, producing slightly larger errors on the coarsest meshes and roughly the same level of error on finer meshes. 
\begin{table}
\centering
\subfloat[${\rm Ma} = .1$]{
\begin{tabular}{|c||c|c||c|c||c|c||c|c||}
\hline
\diagbox[width=4em,height=2em]{$K_{\rm 1D}$}{$N$} & 1 & Rate & 2 & Rate  & 3 & Rate & 4 & Rate \\
\hline
2 & 3.97e-4 &          & 4.68e-4 &          & 4.32e-4 &         & 4.56e-4 & \\
4 & 3.45e-4 & .205  & 5.31e-4 & -.181 & 3.26e-4 & .401 & 1.08e-4 & 2.08\\
8 & 3.20e-4 & .106  & 7.30e-5 & 2.86  & 6.57e-6 & 5.63 & 6.29e-7 & 7.42\\
16 &7.74e-5 & 2.05 & 5.35e-6 & 3.77  & 1.73e-7 & 5.25 & 1.71e-8 & 5.20\\
\hline
\end{tabular}
}\\
\subfloat[${\rm Ma} = .3$]{
\begin{tabular}{|c||c|c||c|c||c|c||c|c||}
\hline
\diagbox[width=4em,height=2em]{$K_{\rm 1D}$}{$N$} & 1 & Rate & 2 & Rate  & 3 & Rate & 4 & Rate \\
\hline
2   & 8.32e-3 &          & 1.16e-2 &          & 8.81e-3 &         & 4.88e-3 & \\
4   & 6.95e-3 & .256  & 2.19e-3 & 2.41  & 2.53e-4 & 5.12 & 1.21e-4 & 5.34\\
8   & 1.13e-3 & 2.63  & 6.18e-5 & 5.14  & 1.26e-5 & 4.32 & 1.55e-6 & 6.29\\
16 & 1.97e-4 & 2.52  & 4.67e-6 & 3.73  & 4.66e-7 & 4.76 & 2.23e-8 & 6.12\\
\hline
\end{tabular}
}
\caption{$L^2$ errors for the imposition of no-slip velocity boundary conditions for ${\rm Re} = 50$ at $T_{\rm final} = 1/2$.}
\label{table:convergence}
\end{table}

\subsection{Lid-driven cavity}

We now test the imposition of viscous boundary conditions on the lid-driven cavity problem. This problem is typically used to benchmark incompressible fluid solvers \cite{ghia1982high}, though numerical experiments have also been performed for compressible flows \cite{chen1991primitive}. The domain is the bi-unit box $[-1,1]^2$, and zero no-slip conditions are imposed on the left, right, and bottom boundaries. For all experiments, we take ${\rm Ma} = .1$ and impose $u_1 = 1$ and $u_2 = 0$ on the top boundary. Initial conditions are set to be
\[
\rho = 1, \qquad u_1 = u_2 = 0, \qquad p = \frac{1}{{\rm Ma}^2\gamma}.
\]
We also augment the velocity boundary conditions with either adiabatic or isothermal temperature boundary conditions to test the new entropy stable wall boundary conditions derived in this work. 

We first consider the imposition of adiabatic boundary conditions with $g(t) = 0$. All triangular meshes are constructed by bisecting a uniform quadrilateral mesh of $K_{\rm 1D}\times K_{\rm 1D}$ elements. Figure~\ref{fig:lidcavity} shows the norm of the velocity at final time $T_{\rm final} = 100$ for ${\rm Re} = 100, 1000, 10000$. Simulations are performed using degree $N=3$ polynomials and $K_{\rm 1D} = 16$. While most solution features for ${\rm Re} = 100,1000$ are well-resolved, we note that there is under-resolution near the top left and right hand corners of the domain. This is due to the fact that the velocity boundary conditions are discontinuous between the left and right walls and the lid. However, the simulation remains stable despite this under-resolution.
\begin{figure}
\centering
\subfloat[{\rm Re} = 100]{\includegraphics[width=.3\textwidth]{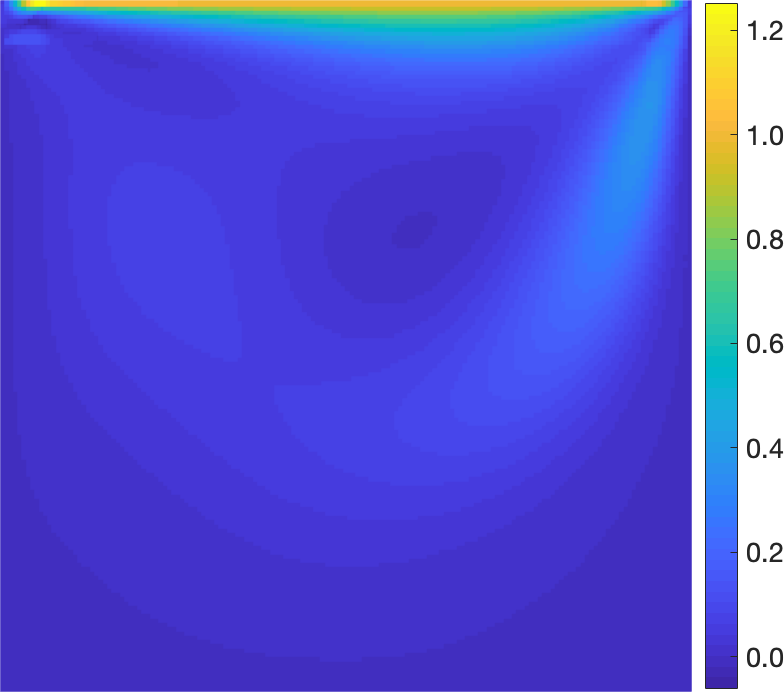}}
\hspace{1em}
\subfloat[{\rm Re} = 1000]{\includegraphics[width=.3\textwidth]{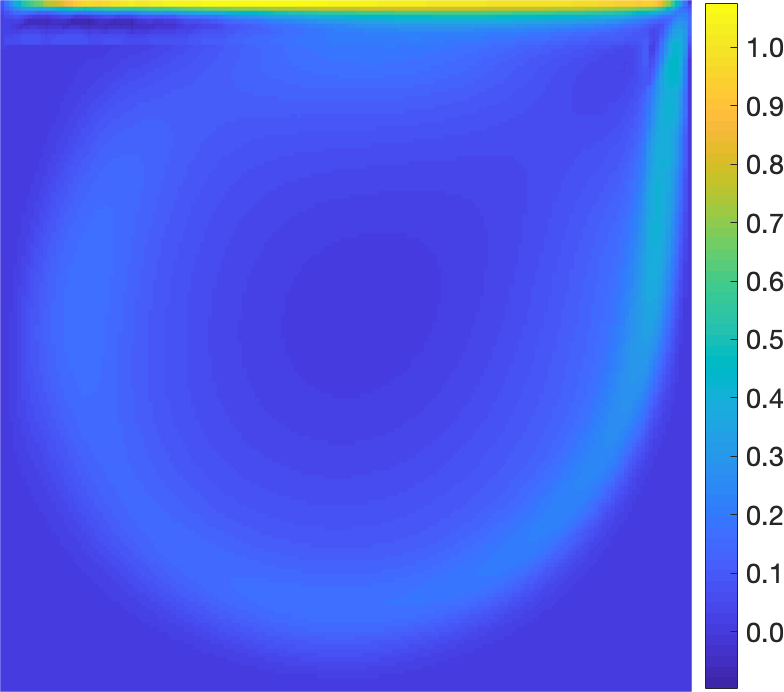}}
\hspace{1em}
\subfloat[{\rm Re} = 10000]{\includegraphics[width=.3\textwidth]{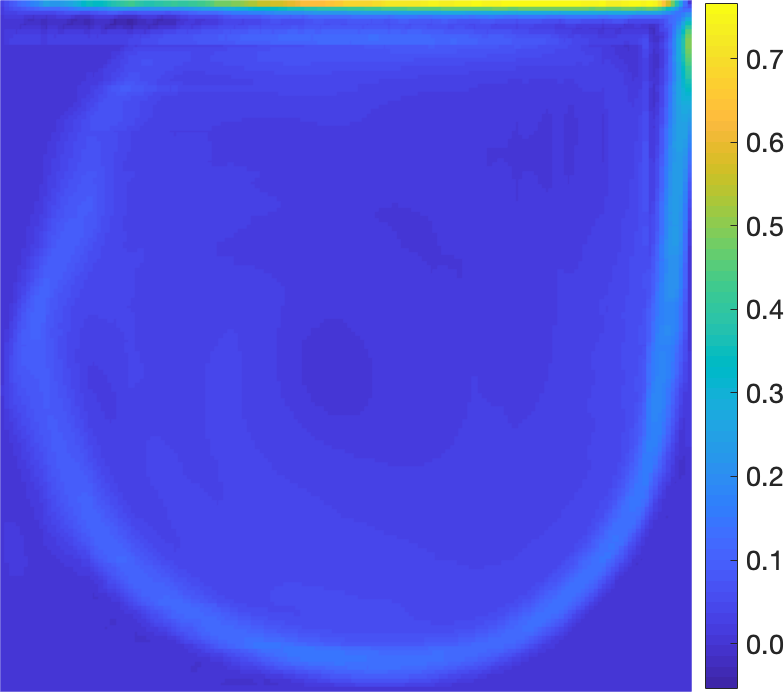}}
\caption{Norm of velocity for the lid-driven cavity problem for ${\rm Ma} = .1$ and $T_{\rm final} = 100$.}
\label{fig:lidcavity}
\end{figure}

The solutions are similar to solutions found in the literature; however, our main goal is to verify the entropy balance results proven in Theorem~\ref{thm:adiabatic}. We solve the lid-driven cavity problem with ${\rm Ma} = .1, {\rm Re} = 1000$, $N=3$, and $K_{\rm 1D} = 16$ to compute the ``viscous entropy residual'' $r(t)$ 
\eqlab{
r(t) = \sum_k \LRs{\LRp{\bm{g}_{\rm visc},\bm{v}}_{D^k} + \sum_{i,j=1}^d \LRp{\bm{K}_{ij}\bm{\Theta}_j,\bm{\Theta}_i}}.
\label{eq:viscresidual}
}
According to Theorem~\ref{thm:adiabatic}, $r(t) = \LRa{c_vg(t),1}_{\partial \Omega}$ in the absence of viscous penalization terms. Theorem~\ref{thm:pen} implies that with viscous penalization terms, $r(t)$ should be equal to $\LRa{c_vg(t),1}_{\partial \Omega}$ plus some negative semi-definite quantity which dissipates entropy. 

\begin{figure}
\centering
\subfloat[$r(t)$ over time for $g(t) = 0$]{\includegraphics[height=.22\textheight]{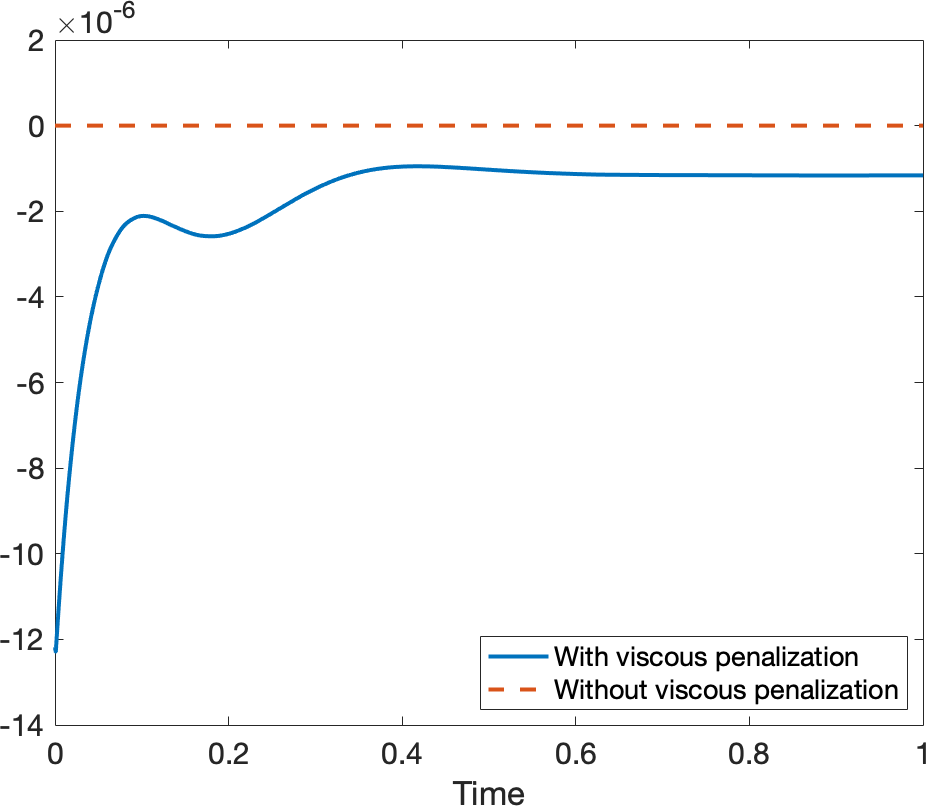}\label{subfig:rt1}}
\hspace{2em}
\subfloat[$r(t)$ for $g(t) = 10^{-4} \sin\LRp{4\pi x}$]{\includegraphics[height=.22\textheight]{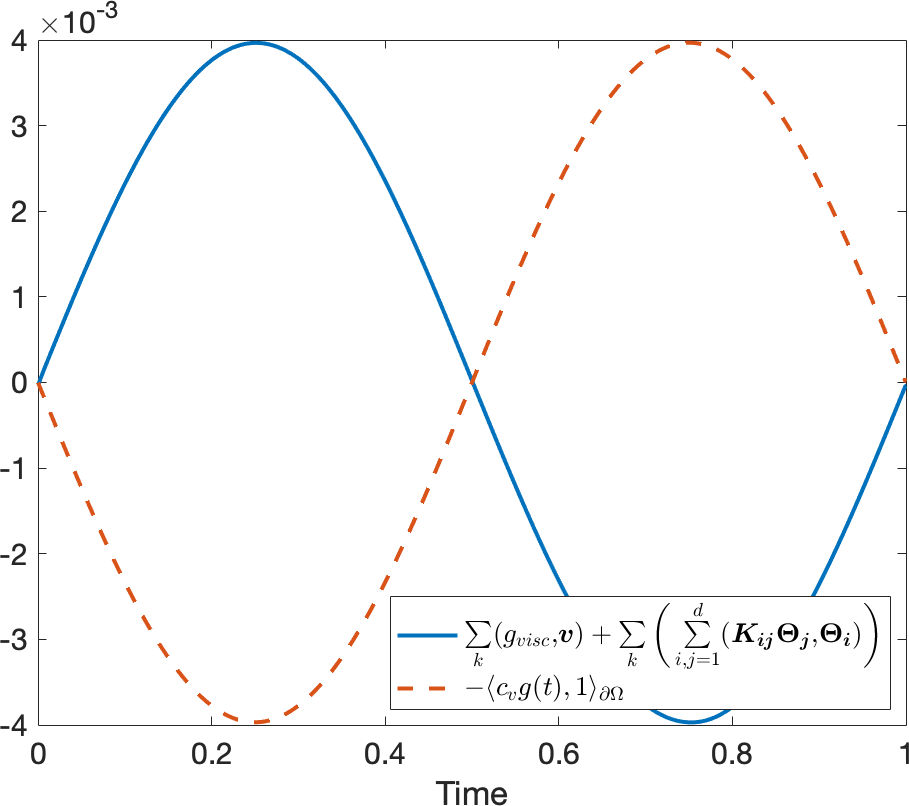}\label{subfig:rt2}}
\caption{Evolution of $r(t)$ for the lid-driven cavity under zero (adiabatic) and non-zero heat entropy flow $g(t) \neq 0$.}
\label{fig:adiabatic}
\end{figure}

Figure~\ref{subfig:rt1} shows the evolution of $r(t)$ over time for $g(t) = 0$ with and without viscous boundary penalization. Without viscous penalization, $r(t)$ is near machine precision. With viscous penalization, $r(t)$ is negative, indicating entropy dissipation. Following \cite{dalcin2019conservative}, we also consider a non-zero heat entropy flow $g(t) = 10^{-4} \sin\LRp{4\pi x}$ at the cavity lid. Here, we remove viscous penalization terms and plot both $r(t)$ and the boundary contribution $-\LRa{c_vg(t), 1}_{\partial \Omega}$. We observe that the two components are equal and opposite in sign, and adding them together yields a contribution which is again near machine precision. 

\begin{figure}
\centering
\subfloat[$r(t)$ and $\LRa{\frac{q_n}{c_v T_{\rm wall}},1}_{\partial \Omega}$]{\includegraphics[height=.22\textheight]{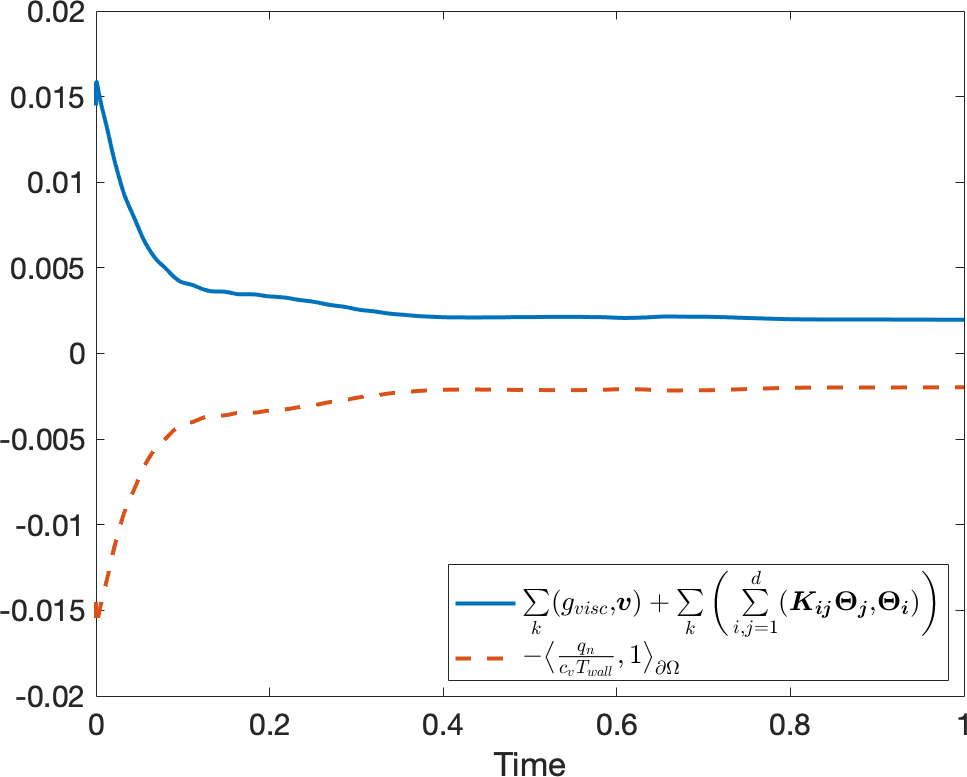}\label{subfig:rti2}}
\hspace{2em}
\subfloat[$\LRp{\bm{g}_{\rm visc},\bm{v}}_{\Omega}$]{\includegraphics[height=.22\textheight]{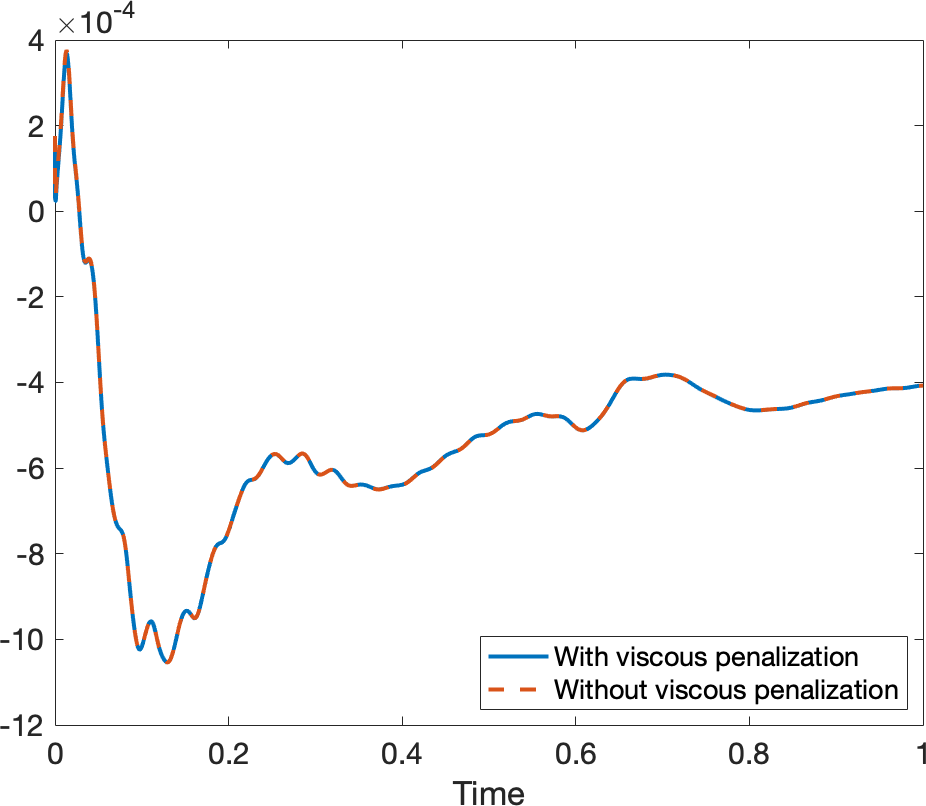}\label{subfig:rti1}}
\caption{Evolution of the viscous entropy dissipation $\LRp{\bm{g}_{\rm visc},\bm{v}}_{\Omega}$ and viscous entropy residual $r(t)$ for the lid-driven cavity under an isothermal wall boundary condition of $T = 1$.}
\label{fig:isotheraml}
\end{figure}
We now examine the imposition of fixed isothermal conditions. As noted earlier, due to the presence of an additional boundary term, this boundary condition is not provably entropy conservative. However, the formulation used in Theorem~\ref{thm:isothermal} mimics the continuous entropy inequality, and the resulting simulations appear to be remain stable in practice. We note that this mimetic property is not unique to our imposition of boundary conditions, and that the method of imposing isothermal boundary conditions in \cite{dalcin2019conservative} also semi-discretely mimics the continuous entropy inequality. We verify Theorem~\ref{thm:isothermal} using an isothermal lid-driven cavity problem with temperature $T=1$ imposed on all boundaries. The solutions at $T_{\rm final} = 100$ are nearly identical visually to the solutions in Figure~\ref{fig:lidcavity}, and are not shown for brevity. Figure~\ref{subfig:rti2} shows the evolution of the viscous entropy residual $r(t)$ and the boundary contribution $\LRa{\frac{q_n}{c_v T_{\rm wall}},1}_{\partial \Omega}$ over time. These two quantities are identical up to machine precision. Finally, Figure~\ref{subfig:rti1} shows the viscous entropy dissipation $\LRp{\bm{g}_{\rm visc},\bm{v}}_{\Omega}$. Since isothermal boundary conditions do not result in provably entropy dissipative boundary contributions, we see that this contribution is positive near the beginning of the simulation.

\subsection{Slip wall boundary conditions}

We next test the imposition of slip wall boundary conditions. We consider a channel domain $[-2,2] \times [-1,1]$ with an adiabatic no-slip wall on the bottom boundary and symmetry boundary conditions on the remaining faces of the channel. We take ${\rm Ma} = 1.5$ and ${\rm Re} = 100, 1000$ with an initial condition 
\[
\rho = \begin{cases}
5,  &x < 0\\
1,  &x \geq 0
\end{cases}, \qquad u_1 = u_2 = 0, \qquad p = \frac{1}{{\rm Ma}^2 \gamma}\rho.
\]
Figure~\ref{fig:sym} shows the squared norm of the velocity as well as the evolution of the viscous entropy residual $r(t)$ defined in (\ref{eq:viscresidual}) for a degree $N=3$ simulation. The domain is meshed using a bisected uniform quadrilateral mesh of $2 K_{\rm 1D} \times K_{\rm 1D}$ elements with $K_{\rm 1D} = 16$. No-slip wall effects are clearly visible on the bottom boundary, while the symmetry boundary condition at the top of the domain leaves the shock undisturbed in the normal direction. The viscous entropy residual is zero up to machine precision in the absence of boundary penalization, as predicted by Theorem~\ref{thm:sym}. We also observe that adding boundary penalization produces a small amount of entropy dissipation, which is more pronounced near the start of the simulation and for the under-resolved case of ${\rm Re} = 1000$.
\begin{figure}
\centering
\subfloat[Norm of velocity $u_1^2 + u_2^2$, ${\rm Re} = 100$]{\includegraphics[height=.15\textheight]{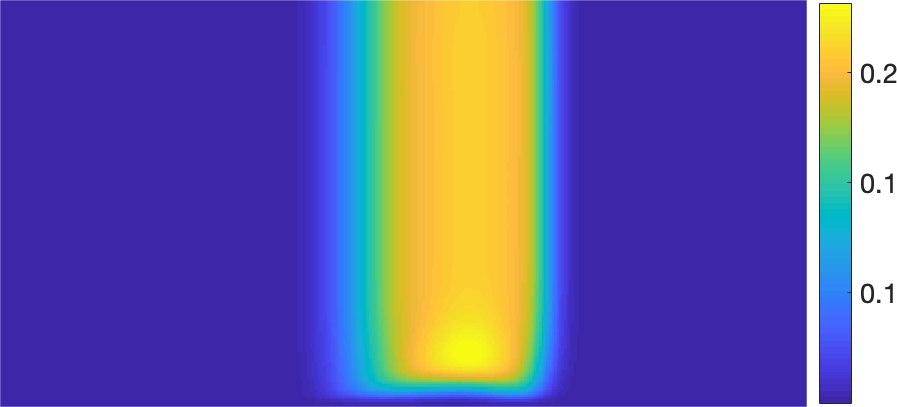}}
\hspace{2em}
\subfloat[Norm of velocity $u_1^2 + u_2^2$, ${\rm Re} = 1000$]{\includegraphics[height=.15\textheight]{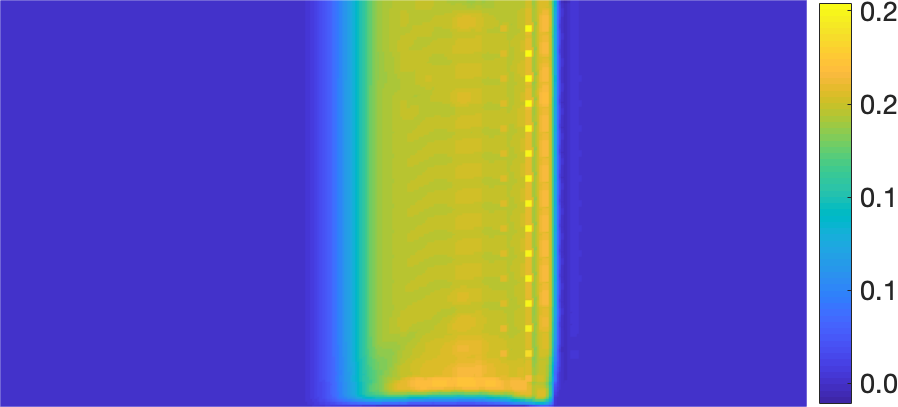}}\\
\subfloat[$r(t)$ for ${\rm Re} = 100$]{\includegraphics[height=.22\textheight]{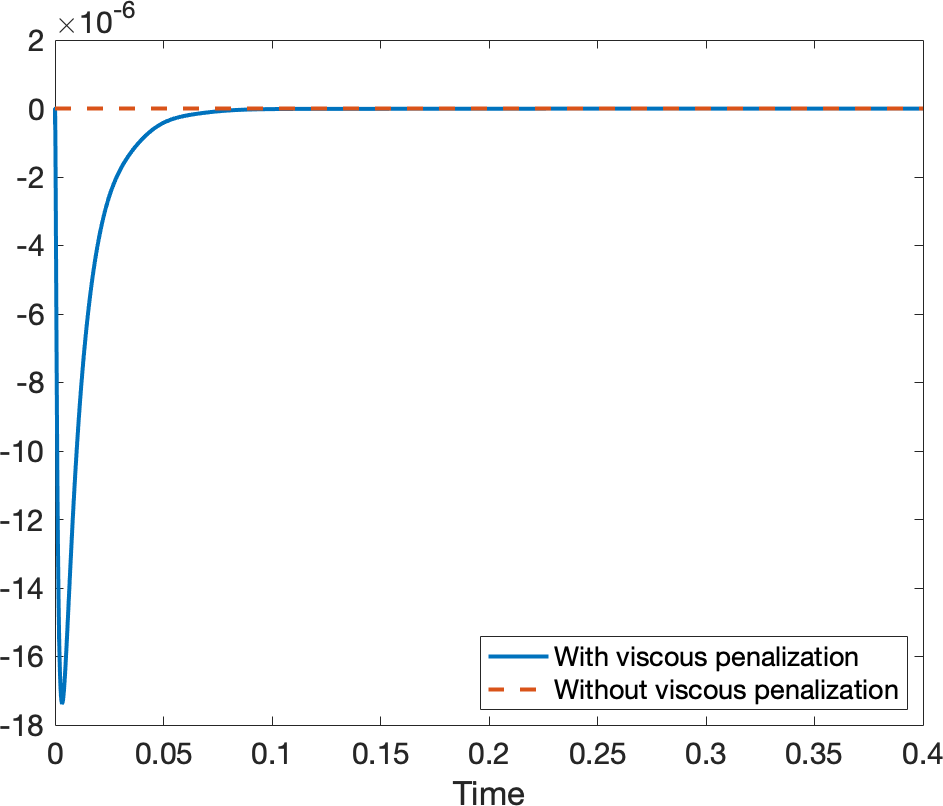}}
\hspace{6em}
\subfloat[$r(t)$ for ${\rm Re} = 1000$]{\includegraphics[height=.22\textheight]{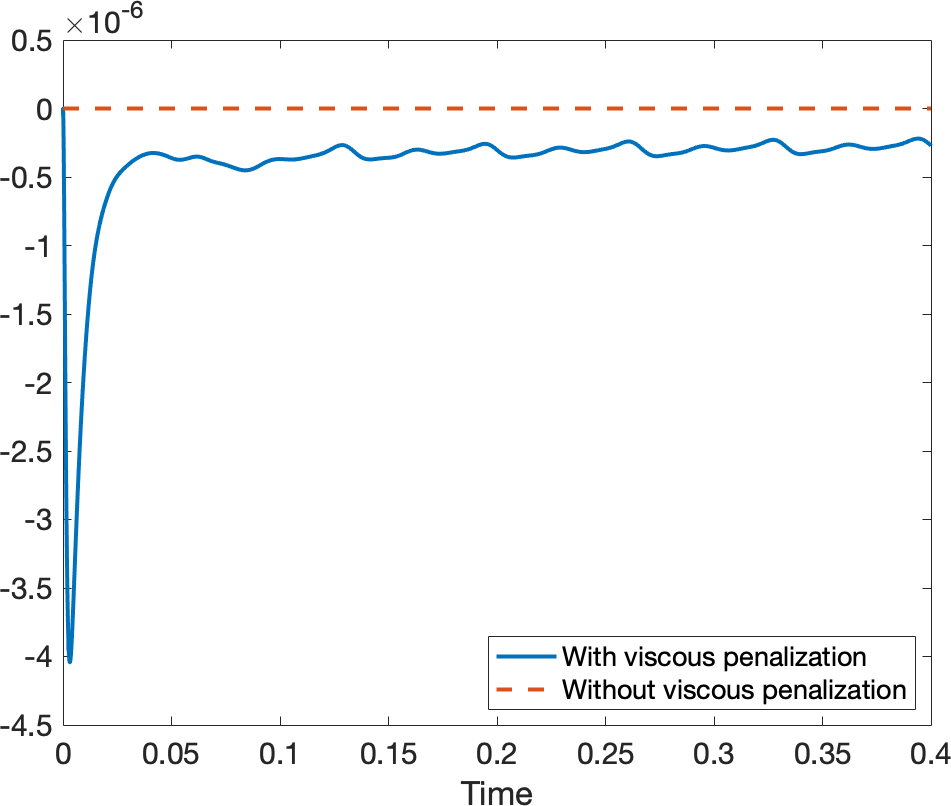}\label{subfig:rts1}}
\caption{Solutions at time $t = .4$ and evolution of the viscous entropy residual $r(t)$ over time for ${\rm Re} = 100, 1000$.}
\label{fig:sym}
\end{figure}

\subsection{Supersonic flow over a square cylinder}

We conclude by investigating supersonic flow from a square cylinder, which includes a variety of physical phenomena including shocks and vorticular features \cite{parsani2015entropy, dalcin2019conservative}. Following \cite{parsani2015entropy, dalcin2019conservative}, we take ${\rm Re} = 10^4$ and ${\rm Ma} = 1.5$ and impose zero adiabatic no-slip solid wall boundary conditions on the cylinder wall. The free-stream values are taken to be
\[
\rho = 1,\qquad  u_1 = 1, \qquad  u_2 = 0, \qquad p = \frac{1}{{\rm Ma}^2\gamma}.
\]
Both the initial condition and the exterior states on the left, top, and bottom boundaries are set using free-stream values. For the outflow boundary on the right, we utilize a simple ``extrapolation'' condition and set the exterior value equal to the interior value (we note that this is not provably entropy stable). Figure~\ref{fig:cns_cyl} shows the density for a degree $N=3$ simulation at $T_{\rm final} = 100$, as well as the triangular mesh of 16574 elements generated by Gmsh \cite{geuzaine2009gmsh}. Shocks and and trailing vortices behind the square cylinder are both visible in the numerical solution. 

\begin{figure}
\centering
\subfloat[Mesh]{\includegraphics[height=.35\textheight]{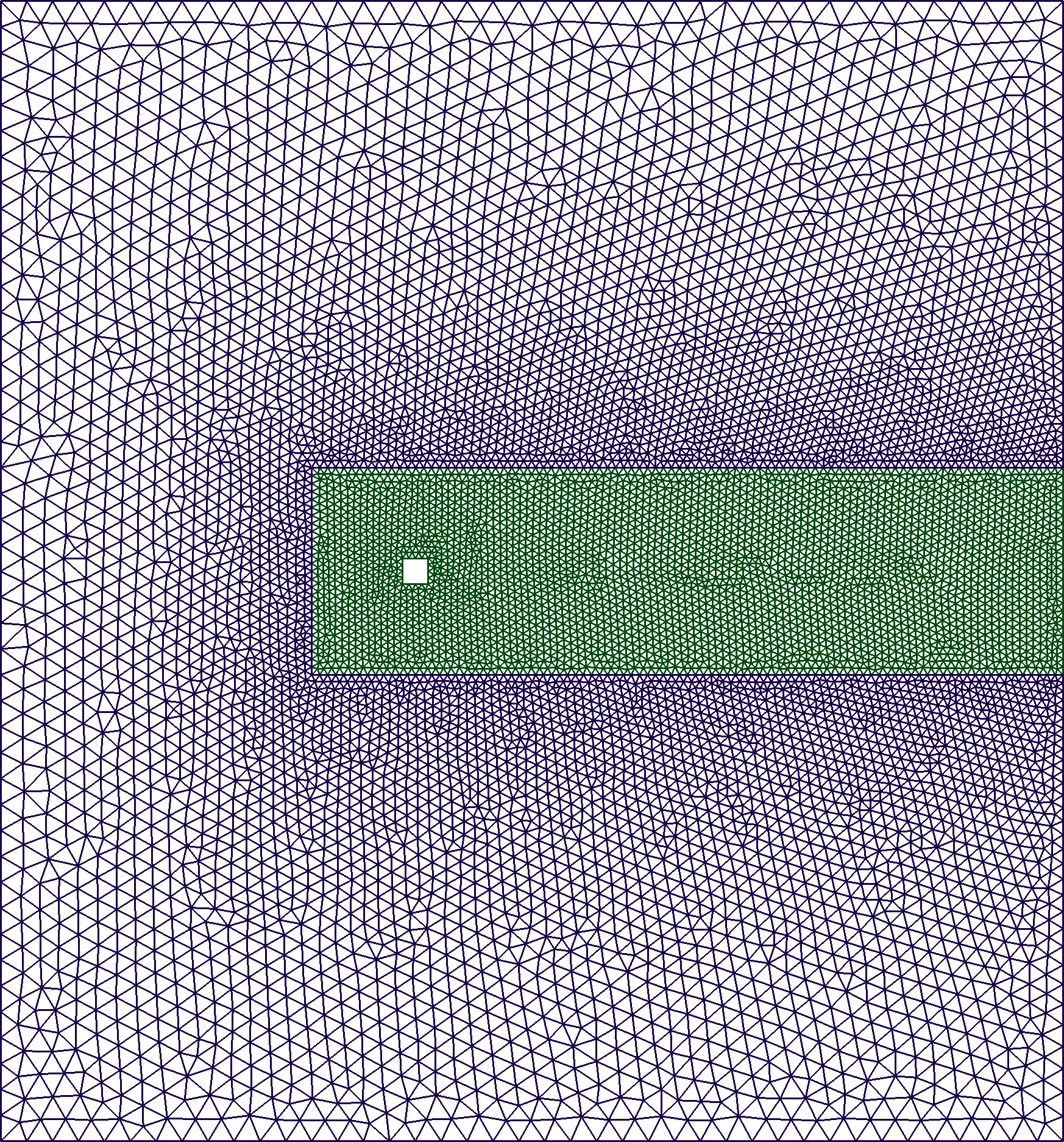}}
\hspace{3em}
\subfloat[Zoom of density $\rho$ at $T_{\rm final} = 100$]{\includegraphics[height=.35\textheight]{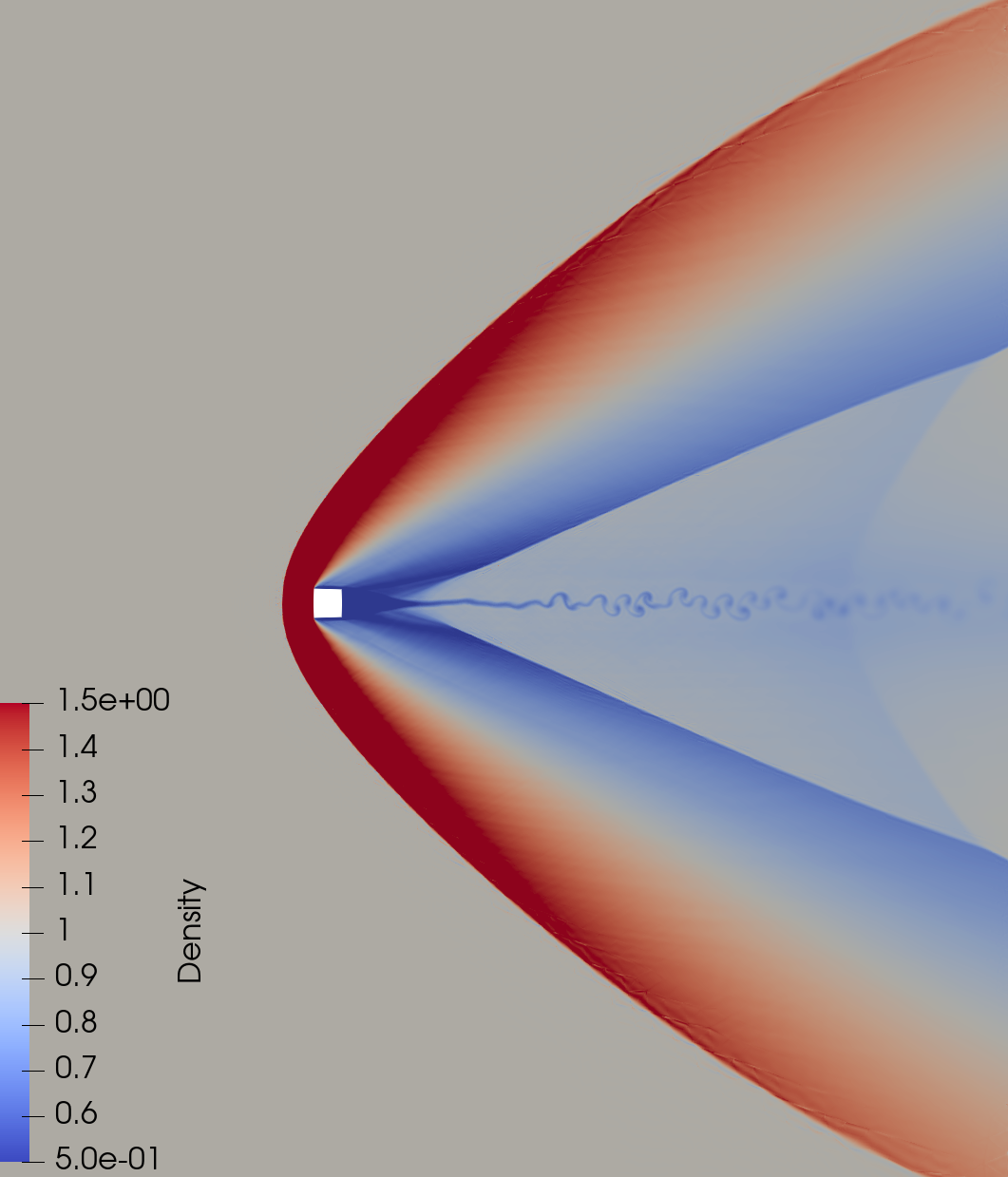}}
\caption{Computational mesh and density $\rho$ at $T_{\rm final} = 100$ using a degree $N=3$ approximation.}
\label{fig:cns_cyl}
\end{figure}

For clearer visualization, we use a color range of $[.5, 1.5]$. The simulation remains stable without additional artificial viscosity or limiting, though some numerical artifacts are observable (e.g., Gibbs oscillations in the vicinity of shock discontinuities, striations originating from the bow shock). 

\section{Conclusion}
\label{sec:conc}

In this paper, we present an entropy stable approach for discretizing viscous terms and enforcing wall boundary conditions for the compressible Navier-Stokes equations. This approach decouples the treatment of volume integrals involving symmetrized viscous coefficient matrices from the treatment of boundary terms, and results in simple and explicit formulas for the entropy stable imposition of no-slip and reflective (slip) boundary conditions. 

\section*{Acknowledgments}

Jesse Chan and Yimin Lin gratefully acknowledge support from the National Science Foundation under award DMS-CAREER-1943186. Tim Warburton was supported in part by the Exascale Computing Project, a collaborative effort of two U.S. Department of Energy organizations (Office of Science and the National Nuclear Security Administration) responsible for the planning and preparation of a capable exascale ecosystem, including software, applications, hardware, advanced system engineering, and early testbed platforms, in support of the nation’s exascale computing imperative. Tim Warburton was also supported in part by the John K. Costain Faculty Chair in Science at Virginia Tech. Finally, the authors thank Matteo Parsani and Lisandro Dalcin for informative discussions. 

\bibliographystyle{unsrt}
\bibliography{refs}

\end{document}